\crefname{assumption}{Assumption}{Assumptions}
\newtheorem{lemma}{Lemma}
\newtheorem{assumption}{Assumption}
\title{Predictive performance of power posteriors%
\blankfootnote{Code to replicate the results is freely available at \url{https://github.com/yannmclatchie/power-posterior-prediction}.}%
}
\keywords{generalised Bayes; power posteriors; learning rate; posterior predictive distribution.}
\author[1]{Yann McLatchie}
\author[2]{Edwin Fong}
\author[3]{David T. Frazier}
\author[1]{Jeremias Knoblauch}
\affil[1]{Deparment of Statistical Science, Univesity College London}
\affil[2]{Department of Statistics and Actuarial Science, The University of Hong Kong}
\affil[3]{Department of Econometrics and Business Statistics, Monash University}
\let\inserttitle\@title
\begin{document}
\maketitle
\begin{abstract}
    We analyse the impact of using tempered likelihoods in the production of posterior predictions. 
While the choice of temperature has an impact on predictive performance in small samples, we formally show that in moderate-to-large samples, tempering does not impact posterior predictions.

\end{abstract}
\section{Introduction}
Bayesian inference has become a popular framework for inference due to the incorporation of prior beliefs and automatic uncertainty quantification.
Traditional Bayesian analysis, however, is only optimal under the strong assumption that the model is well-specified \citep{bernardo_bayesian_1994,aitchison_goodness_1975}.
When this assumption is violated and the true data-generating distribution is outside the model class, the standard Bayes posterior can deliver unreliable inference \citep{owhadi_brittleness_2015,bissiri_general_2016,jewson_principles_2018, knoblauch_optimization_2022}.
A popular remedy are \textit{power posteriors} \citep{holmes_assigning_2017,grunwald_inconsistency_2017}, also known as \textit{fractional} \citep{bhattacharya_bayesian_2019} or $\alpha$\textit{-posteriors} \citep{yang_alpha_2020}.
These temper the likelihood $f_\theta$ by raising it to a power $\tau\in\mathbb{R}^+$, a constant referred to as the \textit{temperature} or \textit{learning rate}.
Denoting the prior density over $\theta$ by $\pi$, and $\y$ as the observed data, the power posterior is
\begin{equation}\label{eq:power-posterior}
    \pi_n^{(\tau)}(\theta\mid\y) \propto \pi(\theta)f_\theta(\y)^\tau.
\end{equation}
For $\tau=1$, this recovers the Bayes posterior, which optimally processes information if the model is correctly specified \citep{zellner_optimal_1988}, while $\tau < 1$ can improve robustness to model misspecification in small-to-moderate sample sizes \citep{grunwald_inconsistency_2017}.
Further, \citet[Equation 3.5]{miller_robust_2019} argue that \Cref{eq:power-posterior} is an approximation of Bayes' Theorem when conditioning on neighbourhoods of the observed data and show asymptotically that taking $\tau<1$ can provide inferential benefits.
Similarly, \citet{bhattacharya_bayesian_2019} use asymptotic arguments to argue that power posteriors have preferable concentration and generalisation properties.

To find an appropriate value for $\tau$, various contributions have focused on frequentist calibration \citep[see e.g.][]{syring_calibrating_2019,lyddon2019general,altamirano_robust_2023,matsubara_generalized_2023}, expected information matching \citep{holmes_assigning_2017}, and the so-called \textit{SafeBayes} approach \citep{grunwald2012safe, grunwald_inconsistency_2017}.
For a comparison of such methods, see \citet{wu_comparison_2023}.
Choosing $\tau$ for optimal predictive performance, however, is under-studied.
This is surprising given the growing interest in Bayesian prediction \citep{fortini_predictive_2012,kenett_predictive_2016,fortini_exchangeability_2024,fong_martingale_2023}.
In the remainder, we study the effect of $\tau$ on predictive performance in settings where the sample size is large enough for posterior concentration to be in evidence.
The results are intriguing: while varying $\tau$ may yield predictive benefit in small samples, in moderate-to-large samples the choice of $\tau$ has no meaningful impact on predictive performance. Indeed, trying to choose the value of $\tau$ that provides optimal predictive performance leads to an ill-defined optimisation problem.
\section{A predictive view on power posteriors}
\subsection{Choosing the temperature predictively}\label{sec:predictive-temp}
With the rise of \textit{algorithmic modelling} \citep{breiman_statistical_2001}, mainstream research has increasingly emphasised the prediction of observables.
For a Bayesian statistician, this amounts to focusing on the \textit{posterior predictive}, which integrates out parameter uncertainty via
\begin{equation*}
    p_n^{(\tau)}(\cdot \mid \y) = \int f_\theta(\cdot\mid\y) \dt \pi_n^{(\tau)}(\theta \mid \y).
\end{equation*}
For large but finite values of $\tau$, the prior is discounted and the posterior $\pi_n^{(\tau)}(\theta\mid\y)$ places most of its mass in a small neighbourhood around the point estimator $\hat\theta_n=\argmax_{\theta} \log f_\theta(\y)$, so that the resulting posterior predictive resembles the \textit{plug-in predictive} $f_{\hat \theta_n}(\cdot\mid\y)$.
Conversely, for small values of $\tau$, the posterior predictive resembles the \textit{prior predictive} $\int f_\theta(\cdot\mid\y){\dt\pi(\theta)}$.
A na\"ive approach would be to assume that there exists a unique choice for $\tau$ that interpolates the two in a predictively optimal manner, and in turn optimise for it.
For instance, in the idealised case where one knows the true predictive distribution $\preddens_n(\cdot\mid\y)$, one could attempt to choose $\tau$ so that its distance from the posterior predictive $p_n^{(\tau)}(\cdot\mid\y)$ is minimised.
In this paper we show that this is not possible:
both theoretically and empirically, once $n$ is even just moderately large, all values of $\tau$ taken over any positive and compact interval deliver a $p_n^{(\tau)}(\cdot\mid\y)$ with virtually identical predictive performance.
Our findings are two-fold:
first, we show that there is little theoretical benefit to choosing $\tau$ predictively.
Second, if, regardless, we choose tau by optimising predictive performance then we demonstrate that this results in an ill-defined optimisation problem.
Before formally stating this result, we demonstrate this behaviour on a simple numerical example.
\subsection{Normal location example}\label{sec:normal-location-example}
\begin{figure}[t!]
    \centering
    \includegraphics{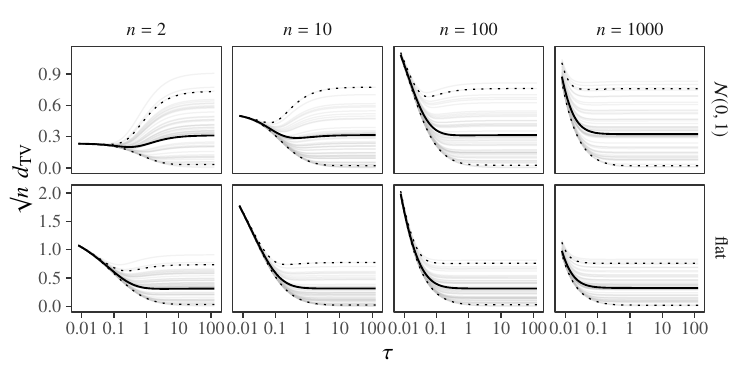}
    \vspace*{-1em}
    \caption{$\surd{n}$-scaled total variation between the power posterior predictive $p_n^{(\tau)}(\cdot \mid \y)$ of  a normal location model and the true predictive $\preddens_n(\cdot\mid\y)$.
    Grey curves correspond to individual dataset replicates,  dotted black lines to $5\%$ and $95\%$ quantiles, and solid black curves to expectation.}
    \label{fig:normal-location-tvd}
\end{figure}
We simulate $n$ independent and identically distributed observations from a Gaussian distribution with zero mean and unit variance, construct posterior predictives $p_n^{(\tau)}$ based on the normal likelihood, and then {compare this against the infeasible true predictive, denoted as $\preddens_n(\cdot\mid\y)$, in total variation distance,  by computing  $d_{\mathrm{TV}}\{\preddens_n(\cdot\mid\y), p_n^{(\tau)}(\cdot \mid \y)\}$} over $1,000$ data replications while varying $0.01 \leq \tau \leq 100$.
We show the average total variation distance, across the replications, and several individual replicates in \Cref{fig:normal-location-tvd}.
{Since the model is correctly specified, we can show that} $d_{\mathrm{TV}}\{\preddens_n(\cdot\mid\y), {f_{\hat \theta_n}(\cdot\mid\y)}\}$ vanishes  at rate $\surd{n}$ in this example, which is why we scale it with $\surd{n}$ to aid visualisation.
We repeat this for both a weakly-informative and a flat prior.
The resulting plot exposes an intriguing phenomenon:
for $\tau$ {on a compact interval}, the distance is essentially flat, so that identifying a predictively optimal value for $\tau$ is numerically fragile.

This suggests that the posterior predictive distribution is indistinguishable from the plug-in predictive once $\tau$ exceeds a critical threshold that appears to scale as $n^{-1/2}$.
Of course, in small samples, tempering can play an important role predictively \citep{grunwald_inconsistency_2017};
see~\Cref{sec:normal-location-supp} in the supplementary material and~\Cref{sec:additional-numerical-experiments} below for further discussion. {Importantly, none of these phenomena depend on correct model specification: the same behaviour is observed when the model is misspecified.
Indeed, our  theoretical findings identify the key condition for these results to be  posterior concentration.}
\section{The temperature is eventually inconsequential to predictive accuracy}\label{sec:theory}
\subsection{Technical results}
\label{sec:technical-results}
In this section, we show that as $n$ increases, $p_n^{(\tau)}(\cdot\mid\y)$ and ${f_{\hat \theta_n}(\cdot\mid\y)}$ become uniformly close over $\tau$.
Hence, in large samples, varying $\tau$ will not improve predictive performance.

For simplicity, assume that $\tau$ lies in a positive {and compact} interval, {$\tau \in [\underline{\tau}, \overline{\tau}]$ for constants $0<\underline{\tau} < \overline{\tau} < \infty$}.
{To state our assumptions, we take $d: \Theta \times \Theta \to [0,\infty)$ to be a divergence so that $d(\theta,\theta')\ge0$ for all $\theta,\theta'\in\Theta$ and $d(\theta,\theta
)=0$ implies that $\theta=\theta'$.
This divergence can be taken to be the Euclidean distance when $\Theta\subseteq\mathbb{R}^{d}$, but can take other forms for more general parameter spaces.} %
Further, define $\mathsf{L}(\theta)=\lim_{n\to\infty} n^{-1}\log f_\theta(\y)$, $\theta^{\star}=\argmax_{\theta} \mathsf{L}(\theta)$ as the population-optimal value for $\theta$, and $\dgp$ as the distribution from which the observations $\y$ are drawn.
{Throughout, we do not assume that the model is well-specified, so that there need not exist a $\theta^\star\in\Theta$ for which $f_{\theta^\star} = \preddens_n$.
Instead, we posit two generic assumptions:
the first imposes a mild technical condition  satisfied for a wide range of models, including all regular parametric statistical models, as well as a range of non-parametric models.
The second is a posterior concentration condition, and is likewise satisfied in a very large class of settings.}
\begin{assumption}
\label{ass:lipz}
For $\lambda$ denoting the Lebesgue measure, for some $\varepsilon>0$, any $\theta$ such that $d(\theta,\theta^{\star})\le\varepsilon$, and any $\y$, there exists a constant $0<M_\varepsilon<\infty$, independent of $\theta$ and $\y$, so that
\begin{IEEEeqnarray}{rCl}
    \label{eq:lipz}
    \int \left\{f_\theta(x\mid\y)^{1/2}-f_{\theta^{\star}}(x\mid\y)^{1/2} \right\}^2\dt\lambda(x)\le M_\varepsilon d(\theta,\theta^{\star})^2.
\end{IEEEeqnarray}
\end{assumption}
This assumption is similar to differentiability in quadratic mean, which is satisfied by statistical models with positive Fisher information at $\theta^\star$ \citep[][Lemma 7.6]{vaart_asymptotic_1998}.
While the assumption is given in its most general form, the dependence of $M_{\varepsilon}$ on $\varepsilon$ can be suppressed entirely if a sufficiently large constant can be found that satisfies the condition uniformly over shrinking values of $\varepsilon$: a situation common in both parametric and non-parametric settings.
\begin{assumption}\label{ass:concentration}
Take $\varepsilon>0$, $K>0$, $C>0$, and $A_\varepsilon=\{\theta\in\Theta:d(\theta,\theta^{\star})\le K\varepsilon\}$.
There exists a sequence $\varepsilon_n>0$, $\varepsilon_n\to0$, {$n\varepsilon_n^2/M_{\varepsilon_n}\rightarrow\infty$}, such that, for $K$ sufficiently large,
\begin{equation*}
\int \mathds{1}_{\left\{\theta\in A_{\varepsilon_n}^c\right\}}\, {\dt\pi_n^{(\tau)}(\theta\mid \y)} \le \exp(-Cn\tau\varepsilon_n^2K^2)
\end{equation*}
with $\mathbb{P}$-probability at least $1-\exp(-Cn\tau\varepsilon_n^2K^2)$.
Further, there is a sequence $\nu_n\rightarrow0,$ as $n\rightarrow\infty$, so that $d(\hat\theta_n,\theta^\star)\le \nu_{n}/M^{1/2}_{\nu_n}$,  with $\mathbb{P}$-probability at least $1-\nu_n$, with $M_{\nu_n}$ as in \Cref{ass:lipz} and such that {$M_{\nu_n}\ll\nu_n$}.
\end{assumption}
\Cref{ass:concentration} says that as $n$ increases, and in high probability, $\pi_n^{(\tau)}(\theta\mid \y)$ allocates an increasing amount of its probability mass onto a ball containing $\theta^\star$, and that $\hat\theta_n$ approaches $\theta^\star$ at rate $\nu_n$.
Rather than directly assume posterior concentration as we have done, one may instead derive this assumption, and subsequent results, from sufficient conditions similar to those of \citet[Theorems 1 and 2]{martin_direct_2022}.
In \Cref{sec:additional-results} of the supplementary material, we demonstrate that \Cref{ass:concentration} is satisfied under some well-understood regularity conditions.
While we will later empirically investigate only finite-dimensional models, our maintained assumptions do not limit us to this setting.
Indeed, our results apply to models of growing dimension and non-parametric models.
\begin{lemma}\label{lemma:tvd}
Under \cref{ass:lipz,ass:concentration}, for any $0<\underline{\tau}< \overline{\tau} < \infty$ and $\tau\in[\underline{\tau}, \overline{\tau}]$,
\begin{equation}\label{eq:tvd-bound}
    \tvdcurl{{f_{\hat \theta_n}(\cdot\mid\y)}}{p_n^{(\tau)}(\cdot\mid\y)} \le 2\max\left\{\varepsilon_n+\exp(-Cn\tau\varepsilon_n^2/ M_{\varepsilon_n}), \nu_n\right\}
\end{equation}
with $\mathbb{P}$-probability at least $1-2\max\left\{\varepsilon_n+\exp(-Cn\tau\varepsilon_n^2/M_{\varepsilon_n}), \nu_n\right\}$.
\end{lemma}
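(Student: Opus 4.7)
The plan is to apply the triangle inequality through the population-optimal density $f_{\theta^\star}(\cdot\mid\y)$, splitting the target distance into a plug-in piece and a mixture piece. Concretely, I would start from
\[
\tvdcurl{f_{\hat\theta_n}(\cdot\mid\y)}{p_n^{(\tau)}(\cdot\mid\y)} \le \tvdcurl{f_{\hat\theta_n}(\cdot\mid\y)}{f_{\theta^\star}(\cdot\mid\y)} + \tvdcurl{f_{\theta^\star}(\cdot\mid\y)}{p_n^{(\tau)}(\cdot\mid\y)},
\]
and treat each summand with the common tool $d_{\mathrm{TV}}(p,q)\le\bigl(\int(p^{1/2}-q^{1/2})^2\dt\lambda\bigr)^{1/2}$, which converts \Cref{ass:lipz} into a Lipschitz-type bound between $f_\theta$ and $f_{\theta^\star}$ in total variation.

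For the plug-in term, \Cref{ass:lipz} gives $\tvdcurl{f_{\hat\theta_n}(\cdot\mid\y)}{f_{\theta^\star}(\cdot\mid\y)}\le M_{\nu_n}^{1/2}\,d(\hat\theta_n,\theta^\star)$. Feeding in the second part of \Cref{ass:concentration}, namely $d(\hat\theta_n,\theta^\star)\le \nu_n/M_{\nu_n}^{1/2}$, the $M_{\nu_n}^{1/2}$ factor cancels, leaving the clean bound $\nu_n$ with $\mathbb{P}$-probability at least $1-\nu_n$. For the mixture term, I would use the convexity of total variation applied to $p_n^{(\tau)}(\cdot\mid\y)=\int f_\theta(\cdot\mid\y)\dt\pi_n^{(\tau)}(\theta\mid\y)$ to obtain
\[
\tvdcurl{f_{\theta^\star}(\cdot\mid\y)}{p_n^{(\tau)}(\cdot\mid\y)} \le \int \tvdcurl{f_{\theta^\star}(\cdot\mid\y)}{f_\theta(\cdot\mid\y)}\dt\pi_n^{(\tau)}(\theta\mid\y),
\]
and then split the outer integral over the carefully calibrated ball $A_n=\{\theta:d(\theta,\theta^\star)\le K\varepsilon_n/M_{\varepsilon_n}^{1/2}\}$ and its complement $A_n^c$. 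On $A_n$, \Cref{ass:lipz} delivers the uniform pointwise bound $\tvdcurl{f_{\theta^\star}}{f_\theta}\le K\varepsilon_n$. On $A_n^c$, the first part of \Cref{ass:concentration}, applied at the rescaled radius $\varepsilon_n/M_{\varepsilon_n}^{1/2}$, bounds $\pi_n^{(\tau)}(A_n^c\mid\y)$ by $\exp(-Cn\tau\varepsilon_n^2/M_{\varepsilon_n})$ on an event of matching $\mathbb{P}$-probability, and $d_{\mathrm{TV}}$ is trivially at most $1$ there.

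Combining the two halves produces a bound of the form $\nu_n+K\varepsilon_n+\exp(-Cn\tau\varepsilon_n^2/M_{\varepsilon_n})$ which, after absorbing $K$ into $C$ and using $a+b\le 2\max\{a,b\}$, collapses into the stated expression $2\max\{\varepsilon_n+\exp(-Cn\tau\varepsilon_n^2/M_{\varepsilon_n}),\nu_n\}$. A union bound over the two exceptional events of \Cref{ass:concentration} then yields the claimed probability guarantee, uniformly in $\tau\in[\underline{\tau},\overline{\tau}]$ because the exponent is monotone in $\tau$ and $\underline{\tau}>0$. The main obstacle I anticipate is the simultaneous calibration of $A_n$: its radius must be just large enough for the posterior concentration in \Cref{ass:concentration} to apply at the scale $\varepsilon_n/M_{\varepsilon_n}^{1/2}$, yet small enough that the Lipschitz bound inside $A_n$ costs only $O(\varepsilon_n)$ in total variation, so that the exponent in the final bound comes out to precisely $n\tau\varepsilon_n^2/M_{\varepsilon_n}$ rather than a strictly smaller rate.
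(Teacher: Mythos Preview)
Your proposal is correct and follows essentially the same route as the paper: triangle inequality through $f_{\theta^\star}$, the Hellinger--total variation inequality combined with \Cref{ass:lipz}, a split over the ball $\{d(\theta,\theta^\star)\le \varepsilon_n/M_{\varepsilon_n}^{1/2}\}$ with \Cref{ass:concentration} handling the complement, and the same treatment of the plug-in term. The only cosmetic difference is that the paper applies convexity to the \emph{squared Hellinger} distance (via Jensen) and then passes to total variation at the end, whereas you invoke convexity of $d_{\mathrm{TV}}$ directly and apply the Hellinger bound pointwise inside the integral; both orderings are valid and yield the same constants.
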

\Cref{lemma:tvd} shows that the difference between $p_n^{(\tau)}(\cdot\mid\y)$ and the plug-in predictive ${f_{\hat \theta_n}(\cdot\mid\y)}$ vanishes uniformly {over $\tau\in[\underline\tau,\overline\tau]$}, where the rapidity of this convergence depends on the rate of the plug-in estimator, $\nu_n$, and the rate of posterior concentration, $\varepsilon_n$.

In \Cref{sec:additional-results} of the supplementary material,
 we derive a similar result in expectation over $\y$ using the same conditions, (see \Cref{lemma:tvd-expectation}).
Furthermore, we also demonstrate that these results  extend to the case where $\tau = \tau_n$ with $\tau_n\rightarrow0$ as $n\rightarrow\infty$, and $n\tau_n\rightarrow\infty$ (see Lemmas \ref{lem:extra1} and \ref{lem:extra_conv}).
As it is not empirically relevant, the limiting case $\tau\rightarrow\infty$ is not considered.
\subsection{Interpretation}
\label{sec:interpretation}
\Cref{lemma:tvd} shows that the choice of $\tau$ has vanishing impact on prediction as $n\rightarrow\infty$.
As a result, attempting to optimise $\tau$ for predictive performance produces a range of values of $\tau$ that have indistinguishable predictive accuracy.
This finding is robust: it holds in high probability (\Cref{lemma:tvd}), in expectation (\Cref{lemma:tvd-expectation} in the supplement), for sequences $\tau_n\to0$ as $n\to\infty$ (\Cref{sec:additional-results} in the supplement),  and for distances other than total variation (see Section~\ref{sec:loo-cv}).

\Cref{lemma:tvd} provides a rigorous explanation for the behaviour in \Cref{fig:normal-location-tvd}:
as the posterior concentrates, $p_n^{(\tau)}(\cdot\mid\y)$ with $\tau\in[\underline{\tau}, \overline{\tau}]$, becomes indistinguishable from the oracle predictive $f_{\theta^\star}(\cdot\mid\y)$, which is itself indistinguishable from the plug-in predictive ${f_{\hat \theta_n}(\cdot\mid\y)}$, neither of which depend on $\tau$.
As the normal location model in \Cref{sec:normal-location-example} suggests, this behaviour occurs even for  moderate sample sizes.
Of course, our results are asymptotic in nature, and thus not necessarily indicative of the behaviours observed in small sample sizes.
Indeed, for small $n$, \cite{grunwald_inconsistency_2017} demonstrate that tempering can have predictive benefits.
\subsection{Applicability to generalised Bayes}\label{sec:gen-bayes}
Power posteriors are a special case of generalised Bayes posteriors \citep{bissiri_general_2016, knoblauch_optimization_2022}.
Indeed for any loss $\Ln(\theta, \y)$ whose parameters $\theta$ index a statistical model $f_\theta$ satisfying \Cref{ass:lipz}, \Cref{lemma:tvd} applies equally to the predictive $p_n^{(\tau)}(\cdot\mid\y,\Ln) = \int f_\theta(\cdot\mid\y){\dt\pi_n^{(\tau)}(\theta\mid\y,\Ln)}$ constructed from a Gibbs measure $\pi_n^{(\tau)}(\theta\mid\y,\Ln) \propto \pi(\theta) \exp\{-\mathsf{L}(\theta, y_{1:n})\}$ that satisfies \Cref{ass:concentration}.
In settings like this, the choice of $\tau$ still matters greatly, as it calibrates the weight of a data-dependent loss relative to the prior \citep[see e.g.][]{knoblauch_doubly_2018,altamirano_robust_2024,matsubara_robust_2022}.
Consequently, predictive performance for generalised Bayes posteriors is also largely independent of $\tau$, provided the sample size is sufficiently large.
While this was suggested by the experiments in \citet{loaiza2021focused} and \citet{frazier_loss-based_2025}, \Cref{lemma:tvd} provides the first rigorous and general proof of this fact.
\section{Cross-validation and the Kullback-Leibler divergence}
\label{sec:loo-cv}
Total variationgenerally not used to select hyper-parameters like $\tau$.
Instead, one would typically resort to leave-one-out cross-validation of the expected log predictive density induced by $\tau$, denoted $\elpdBiomet(\tau)$, and define
\begin{equation}\label{eq:tau-loss-cv}
    \tau^\star_{\mathrm{CV}} = \argmax_{\tau\in\mathbb{R}^+} \elpdBiomet(\tau).
\end{equation}
For the specific definition of $\elpdBiomet(\tau)$ and additional details,
see \Cref{sec:normal-location-supp} in the supplement.

\begin{figure}[t!]
    \centering
    \includegraphics[width=\textwidth]{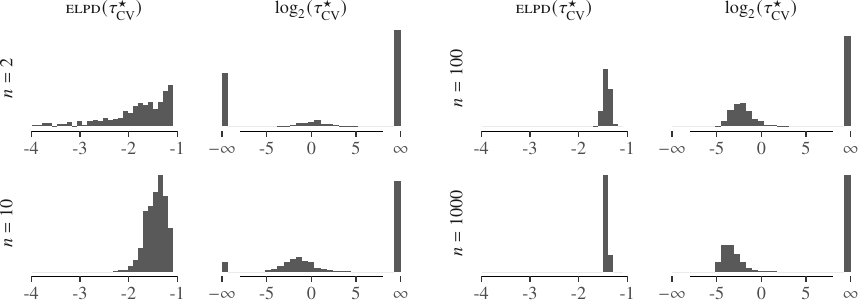}
    \vspace*{-1em}
    \caption{Histograms of $\elpdBiomet(\tau_{\mathrm{CV}}^\star)$ and $\tau_{\mathrm{CV}}^\star$ in a
    normal location model with standard normal prior.}
    \label{fig:normal-location-tau-selection}
\end{figure}
We examine this proposal for temperature selection in the normal location example of \Cref{sec:normal-location-example}.
In \Cref{fig:normal-location-tau-selection}, we plot the distributions of $\elpdBiomet(\tau_{\mathrm{CV}}^\star)$ and $\log_2(\tau_{\mathrm{CV}}^\star)$ over $1,000$ data replicates.
When $n$ is small, the distribution of $\tau^\star_{\mathrm{CV}}$ places most of its mass either on the prior predictive ($\tau_{\mathrm{CV}}^\star = 0$) or the plug-in predictive ($\tau_{\mathrm{CV}}^\star = \infty$).
As $n$ grows, the distribution increasingly shifts its mass away from the prior predictive.
Observe that if there is \textit{any} non-zero probability of selecting $\tau^\star_{\mathrm{CV}} = \infty$, the variance of the estimator defined in~\Cref{eq:tau-loss-cv} is infinite.
In the supplement, we perform additional experiments using cross-validation to predictively choose $\tau$, and further confirm that the conclusions are the same as for total variation case.

Since $\elpdBiomet(\tau)$ approximates the Kullback-Leibler divergence $d_{\mathrm{KL}}\{\preddens_n(\cdot\mid\y); p_{n}^{(\tau)}(\cdot\mid \y, \Ln)\}$, we prove a similar result to \Cref{lemma:tvd} for $d_{\mathrm{KL}}$ and show that as $n\rightarrow\infty$, the choice of $\tau$ does notimpact predictive performance as measured by $d_{\mathrm{KL}}$.
\begin{lemma}\label{lemma:kl}
Under \cref{ass:concentration}, for any $0<\underline{\tau}< \overline{\tau} < \infty$ and $\tau\in[\underline{\tau}, \overline{\tau}]$,
\begin{equation*}
\kldcurl{f_{\theta^\star}(\cdot\mid\y)}{p_n^{(\tau)}(\cdot\mid\y,\Ln)} \le\varepsilon_n^2+\exp(-Cn\tau\varepsilon_n^2)+o(1),
\end{equation*}
with $\mathbb{P}$-probability at least $1  - \exp(-Cn\tau\varepsilon_n^2)$.
\end{lemma}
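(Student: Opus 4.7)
The plan is to reduce the mixture KL divergence to an integrated pointwise KL via Jensen's inequality, and then to split the resulting integral along the concentration set $A_{\varepsilon_n}$ from \Cref{ass:concentration}. Since $-\log$ is convex and the power-posterior predictive is the mixture $p_n^{(\tau)}(x\mid\y,\Ln)=\int f_\theta(x\mid\y)\,\dt\pi_n^{(\tau)}(\theta\mid\y,\Ln)$, Jensen's inequality gives $-\log p_n^{(\tau)}(x\mid\y,\Ln)\le -\int \log f_\theta(x\mid\y)\,\dt\pi_n^{(\tau)}(\theta\mid\y,\Ln)$. Integrating against $f_{\theta^\star}(\cdot\mid\y)$ and adding $\int f_{\theta^\star}\log f_{\theta^\star}$ yields
\[
    \kldcurl{f_{\theta^\star}(\cdot\mid\y)}{p_n^{(\tau)}(\cdot\mid\y,\Ln)} \le \int \kldcurl{f_{\theta^\star}(\cdot\mid\y)}{f_\theta(\cdot\mid\y)}\,\dt\pi_n^{(\tau)}(\theta\mid\y,\Ln),
\]
which is the analogue of the convexity step that drove \Cref{lemma:tvd}.

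Next I would partition the integral over $\Theta$ into $A_{\varepsilon_n}$ and its complement. On $A_{\varepsilon_n}$, where $d(\theta,\theta^\star)\le K\varepsilon_n$, a standard local expansion of the log-likelihood at $\theta^\star$---namely the quadratic-mean-differentiability that underpins \Cref{ass:lipz}---bounds $\kldcurl{f_{\theta^\star}}{f_\theta}$ by a constant multiple of $d(\theta,\theta^\star)^2\le K^2\varepsilon_n^2$, so that this part of the integral contributes $O(\varepsilon_n^2)$. On $A_{\varepsilon_n}^c$, \Cref{ass:concentration} gives a posterior mass bound of $\exp(-Cn\tau\varepsilon_n^2 K^2)$ on an event of $\mathbb{P}$-probability at least $1-\exp(-Cn\tau\varepsilon_n^2 K^2)$, so that multiplying this exponentially small weight against a mild control of the pointwise KL in the tail delivers a term of order $\exp(-Cn\tau\varepsilon_n^2)$, with any residual slack collected into the $o(1)$ remainder. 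Summing the two contributions then produces the stated bound uniformly in $\tau\in[\underline\tau,\overline\tau]$, since the same exponent $\exp(-Cn\tau\varepsilon_n^2)$ is monotone in $\tau$ and hence controlled by its value at $\underline\tau$.

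The main obstacle is that, unlike total variation in \Cref{lemma:tvd}, the pointwise KL divergence $\kldcurl{f_{\theta^\star}}{f_\theta}$ is not uniformly bounded in $\theta$ and can grow arbitrarily fast as $d(\theta,\theta^\star)$ increases. Consequently, the tail step requires either a polynomial or sub-exponential growth condition on $\theta\mapsto \kldcurl{f_{\theta^\star}}{f_\theta}$, or a comparable integrability argument, so that the exponentially small posterior mass on $A_{\varepsilon_n}^c$ dominates and reduces the tail contribution to the $o(1)$ term in the statement rather than to a clean exponential factor. Subject to such a side condition---routinely available for the regular models invoked in \Cref{sec:additional-results}---the decomposition above closes the argument and delivers the claimed high-probability bound.
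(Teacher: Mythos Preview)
Your decomposition has a genuine gap, and it is precisely the one you flag as the ``main obstacle'': applying Jensen's inequality to the full mixture first and then splitting leaves you with the tail integral
\[
\int_{A_{\varepsilon_n}^c}\kldcurl{f_{\theta^\star}(\cdot\mid\y)}{f_\theta(\cdot\mid\y)}\,\dt\pi_n^{(\tau)}(\theta\mid\y,\Ln),
\]
in which the integrand is unbounded in $\theta$. You propose to handle this with a polynomial or sub-exponential growth condition on $\theta\mapsto\kldcurl{f_{\theta^\star}}{f_\theta}$, but no such condition is among the hypotheses of the lemma, which invokes only \Cref{ass:concentration}. Your local step is also not supported by the stated assumptions: \Cref{ass:lipz} controls the squared Hellinger distance, not the Kullback--Leibler divergence, and the latter can be arbitrarily larger than the former, so ``the quadratic-mean-differentiability that underpins \Cref{ass:lipz}'' does not by itself deliver $\kldcurl{f_{\theta^\star}}{f_\theta}\lesssim d(\theta,\theta^\star)^2$.

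The paper avoids both difficulties by reversing the order of operations. Rather than applying convexity to the full mixture and then splitting, it first restricts: using non-negativity of $f_\theta$, one has
\[
p_n^{(\tau)}(\cdot\mid\y,\Ln)\ge \Pi_n^{(\tau)}(A_{\varepsilon_n}\mid\y,\Ln)\,p_{A_{\varepsilon_n}}^{(\tau)}(\cdot\mid\y,\Ln),
\]
where $p_{A_{\varepsilon_n}}^{(\tau)}$ is the predictive built from the posterior truncated to $A_{\varepsilon_n}$. Taking $-\log$ and integrating against $f_{\theta^\star}$ converts the tail contribution into the scalar $-\log\Pi_n^{(\tau)}(A_{\varepsilon_n}\mid\y,\Ln)=-\log\{1-\Pi_n^{(\tau)}(A_{\varepsilon_n}^c\mid\y,\Ln)\}$, which is controlled directly by \Cref{ass:concentration} via a Taylor expansion, with no need to bound the pointwise KL on $A_{\varepsilon_n}^c$. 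Only then is convexity applied, to $\kldcurl{f_{\theta^\star}}{p_{A_{\varepsilon_n}}^{(\tau)}}$, and because the paper takes $A_{\varepsilon_n}=\{\theta:\kldcurl{f_{\theta^\star}}{f_\theta}\le\varepsilon_n^2\}$, the resulting integral is bounded by $\varepsilon_n^2$ by construction, without recourse to \Cref{ass:lipz}. This restrict-then-convexify device is the missing idea in your proposal.
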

\section{Additional numerical experiments}\label{sec:additional-numerical-experiments}
We further explore our findings in two additional examples.
First, we show that  contrary to the conclusions one might draw from \Cref{fig:normal-location-tvd}, the plug-in predictive is unsafe, and tempering can improve predictions in small samples.
Second, we demonstrate that our results remain valid under model misspecification.

Casual reading of our results may suggest the plug-in predictive ($\tau = \infty$) as a sensible default choice for prediction.
This is not the case: our results only bound the difference between the posterior predictive and the plug-in predictive for $n$ large enough, and in high-probability.
For example, given $n$ observations sampled according to a Bernoulli distribution with success probability $0.5 < \theta^\star \le 1$, the plug-in predictive will predict all future observations to be failures with $\dgp$-probability $(1 - \theta^\star)^n$.
This corresponds to the worst possible predictive distribution, and \textit{any} $\tau < \infty$ would have performed better.
To illustrate this, we fit a conjugate beta-Bernoulli model with a weakly-informative prior to $n$ samples from a Bernoulli distribution and plot the predictive performance over $0.01\leq\tau\leq100$ in \Cref{fig:beta-binomial-tvd}.
Even for $n = 100$, there are predictively optimal values of $\tau$ across some data replicates, and the choice of temperature does influence the predictive performance in small data samples.
Further, this is not a pathology of discrete data, and it is easy to construct similar examples for the continuous case.
\begin{figure}[t!]
    \centering
    \begin{subfigure}{0.69\linewidth}
        \centering
        \includegraphics{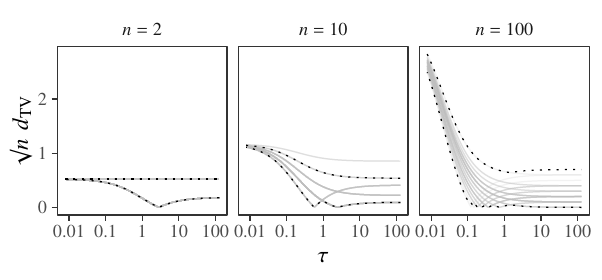}
        \vspace*{-1em}
        \caption{Beta-binomial.}
        \label{fig:beta-binomial-tvd}
    \end{subfigure}
    \hfill
    \begin{subfigure}{0.3\linewidth}
        \centering
        \includegraphics{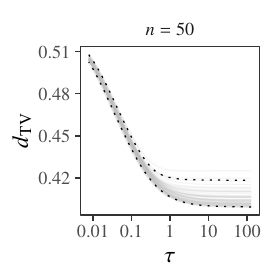}
        \vspace*{-2em}
        \caption{Linear regression.}
        \label{fig:linear-regression-tvd}
    \end{subfigure}
    \caption{Total variation between the  true predictive $\preddens_n(\cdot\mid y_{1:n})$ and the power posterior predictive $p_n^{(\tau)}(\cdot\mid y_{1:n})$ in (a) the beta-binomial experiment and (b) a linear regression experiment.
    Grey curves correspond to individual dataset replicates, and dotted  lines to $5\%$ and $95\%$ quantiles.}
\end{figure}

The previous numerical examples have considered only well-specified models.
Our theory, however, makes no assumptions on correct model specification, and holds under model misspecification.
In \Cref{fig:linear-regression-tvd}, for example, we present the total variation distance for a misspecified linear regression under a weakly-informative prior on the regression coefficients.
Complete numerical results are presented in \Cref{sec:experiment-repetitions} of the supplementary material.
\section{Discussion}\label{sec:discussion}
Our results constitute formal evidence of the common folklore that, in terms of predictive accuracy, parameter uncertainty is of second-order importance relative to data and model uncertainty.
The dominant paradigm in statistical forecasting is to produce probabilistic forecasts that maximise the \textit{sharpness} of the predictions, subject to them being \textit{calibrated} \citep{gneiting_probabilistic_2007}.
Sharpness measures the concentration of the predictive around likely values of the future random variable, and calibration is a frequentist notion of coverage regarding future predictions.
Our study formally demonstrates that, at least in moderate sample sizes, neither sharpness nor calibration of the posterior predictive obtained via the power posterior $\pi^{(\tau)}_n(\theta\mid\y)$ depend on the temperature $\tau$ in any meaningful way.

This is important since, under model misspecification, the oracle model $f_{\theta^\star}$ is not calibrated, and thus nor is the tempered posterior predictive.
As such, likelihood tempering is not sufficient to remedy the lack of calibration due to model misspecification.
This was already noted by \citet{syring_calibrating_2019}, and later investigated by \citet{wu_calibrating_2021} who proposed the twice-tempered predictive $\int f_\theta(\cdot\mid\y)^\tau {\dt\pi_n^{(\tau)}(\theta\mid \y)}$ to resolve this.

Future work will explore which other settings our results extend to.
For instance, the coarsened posteriors introduced by \citet{miller_robust_2019} explicitly prohibit posterior concentration, and thus aren't covered by our assumptions.
It is also not clear for which hierarchical models our results hold, or how our results map to statistically non-identifiable models like Bayesian neural networks.
This is a particularly poignant question since the phenomenon summarised in \Cref{fig:normal-location-tvd} was previously noted empirically in this setting and called the \textit{cold posterior effect} \citep{wenzel_how_2020,aitchison_statistical_2021}.
While the cold posterior effect was thought to be a pathology of Bayesian neural networks, we have rigorously proven it to be a much more universal phenomenon.
Importantly, the assumptions we imposed to do so are never met in Bayesian neural networks, suggesting that the effect may be recoverable under far weaker assumptions.

\subsection*{Acknowledgments}
We are grateful for enlightening discussions with Prof. Pierre Alquier, Dr. Saifuddin Syed, and Dr. Jun Yang which greatly improved this manuscript.
We further thank Prof. Ryan Martin, an anonymous referee, and the Associate Editor for their helpful suggestions.
YM is supported by EP/V521917/1,
JK by EP/W005859/1 and EP/Y011805/1,
and DTF by DE200101070.
\bibliography{main}
\newpage
\begin{appendices}
\counterwithin{figure}{section}
\section{Technical results}
\subsection{Notation}
As mentioned in \Cref{sec:gen-bayes}, power posteriors are a special case of generalised Bayes posteriors where we update our prior $\pi(\theta)$ through an arbitrary loss function $\Ln$, which can be different to the negative log-likelihood.
Our results hold for so-called generalised Bayes posteriors beyond just power posteriors.
Indeed, we can replace any mention of a power posterior $\pi_n^{(\tau)}(\theta\mid\y)\propto\pi(\theta)f_{\theta}(\y)^\tau$ with a generalised Bayesian posterior $\pi_n^{(\tau)}(\theta\mid\y,\Ln)\propto\pi(\theta)\exp\{-\tau\Ln(\theta,\y)\}$ in our assumptions, and proceed by studying the generalised posterior predictive $p_n^{(\tau)}(\cdot\mid\y,\Ln) = \int f_{\theta}(\cdot\mid\y)\dt\pi_n^{(\tau)}(\theta\mid\y,\Ln)$.
In doing so, we show that our results hold for a wide range of posteriors.
\subsection{Main results}
\label{sec:main-results}
In~\Cref{ass:concentration}, we assumed that the risk-minimiser $\hat\theta_n$ converges to the population-optimal $\theta^\star$, and that the Gibbs posterior under that same risk concentrates around a neighbourhood of this $\theta^\star$.
\citet[Theorem 2]{martin_direct_2022} derived this assumption from two sufficient conditions:
that the risk converges uniformly in $\theta$ in $\dgp$-probability, for any $\delta > 0$ and for some small $\beta > 0$,
\begin{equation}\label{ass:uniform-convergence}
    \Esqr{ \sup_{\theta:\,d(\theta,\theta^\star) >\delta} \left\vert n^{-1/2}\{\Ln(\theta) - \Ln(\theta^\star) - \mathsf{L}(\theta) + \mathsf{L}(\theta^\star)\} \right\vert } \lesssim \delta^\beta,
\end{equation}
and, an identifiability condition on $\theta$ such that for a small $\alpha > 0$,
\begin{equation}\label{ass:identifiability}
    \inf_{\theta:\,d(\theta,\theta^\star) >\delta} \{\mathsf{L}(\theta) - \mathsf{L}(\theta^\star)\} \gtrsim \delta^\alpha.
\end{equation}
Under these conditions, \citet[Theorem 5.7, Lemma 19.24]{vaart_asymptotic_1998} shows that $\hat\theta_n$ converges to $\theta^\star$.
And \Cref{ass:uniform-convergence,ass:identifiability} taken together closely resembles~\Cref{ass:concentration_alt1} with $\tau_n$ fixed.
Under an additional weak prior mass assumption, which resembles our~\Cref{ass:concentration_alt2}, \citet[Theorem 2]{martin_direct_2022} show that the Gibbs posterior $\propto\exp\{-\Ln(\theta,\y)\}\pi(\theta)$ concentrates around a neighbourhood of $\theta^\star$.
Their result is similar to our~\Cref{lem:extra_conv} with a fixed $\tau_n > 0$.
In a word, rather than begin with our~\Cref{ass:concentration}, one can start from a set of sufficient conditions which are common to the M-estimation literature and arrive at our result and~\Cref{ass:concentration} that way.

By viewing $\theta$ as risk-minimising parameters more generally than the parameters of a statistical model, we can extend the M-estimation literature \citep[Section 5]{vaart_asymptotic_1998} to include prior information \citep{martin_direct_2022}:
the requirements for a Gibbs posterior under a risk $\Ln$ to concentrate around a neighbourhood of $\theta^\star$ are closely connected to the conditions required for the estimator $\hat\theta_n\to\theta^\star$ under that same risk.
This focus on risk-minimisation is common in generalised Bayesian inference, where certain choices of risk bear richer intuition than the likelihood, something discussed in~\Cref{sec:gen-bayes}.
\begin{proof}[of \Cref{lemma:tvd}]
From the triangle inequality,
\begin{multline}
    \tvdcurl{f_{\hat\theta_n}(\cdot\mid \y)}{p_n^{(\tau)}(\cdot\mid\y,\Ln)} \le \tvdcurl{f_{\theta^{\star}}(\cdot\mid \y)}{f_{\hat\theta_n}(\cdot\mid \y)} \\+ \tvdcurl{f_{\theta^{\star}}(\cdot\mid \y)}{p_n^{(\tau)}(\cdot\mid\y,\Ln)}.\label{eq:triangle-ineq}
\end{multline}
We first show that,  for some constant $C>0$ and with probability at least $1-\exp(-C n\tau\varepsilon_n^2/M_{\varepsilon_n})$, $\tvdcurl{f_{\theta^{\star}}(\cdot\mid \y)}{p_n^{(\tau)}(\cdot\mid\y,\Ln)}\leq \varepsilon_n+2 \exp(-C n\tau\varepsilon_n^2/M_{\varepsilon_n})$.
Recall the relationship between the squared Hellinger distance and squared total variation distance: 
\begin{equation*}
    0\le \tvd{p}{q}^2\le 2\hel{p}{q}^2.
\end{equation*}
Hence, if we can bound $d_{\mathrm{H}}\{f_{\theta^{\star}}(\cdot\mid \y), p_n^{(\tau)}(\cdot\mid\y,\Ln)\}^2$, we have a bound in total variation distance.
From convexity of $q\mapsto \hel{p}{q}^2$ and by Jensen's inequality
\begin{IEEEeqnarray}{rCl}
    \helcurl{f_{\theta^{\star}}(\cdot\mid \y)}{p_n^{(\tau)}(\cdot\mid\y,\Ln)}^2 %
    &=&\helcurl{f_{\theta^{\star}}(\cdot\mid \y)}{\int f_\theta(\cdot \mid \y) \dt \pi_n^{(\tau)}\left(\theta \mid \y, \Ln\right)}^2 \nonumber\\
    &=& \frac{1}{2} \int \bigg[ f_{\theta^{\star}}(x \mid \y)^{1/2} \nonumber\\
    &&\quad\quad\quad- \left\{\int f_\theta(x \mid \y) \dt \pi_n^{(\tau)}\left(\theta \mid \y, \Ln\right)\right\}^{1/2} \bigg]^2 \dt \lambda(x) \nonumber\\
    &\le& \frac{1}{2} \iint \left\{ f_{\theta^{\star}}(x \mid \y)^{1/2} - f_\theta(x \mid \y)^{1/2}  \right\}^2 \nonumber\\
    &&\quad\quad\dt \lambda(x) \dt \pi_n^{(\tau)}\left(\theta \mid \y, \Ln\right) \nonumber\\
    &=& \int_\Theta \helcurl{f_{\theta^{\star}}(\cdot\mid \y)}{f_{\theta}(\cdot\mid \y)}^2 \dt \pi_n^{(\tau)}\left(\theta \mid \y, \Ln\right),\nonumber
\end{IEEEeqnarray}
where $\lambda(x)$ is the Lebesgue measure.
Write $\Theta=A_{\varepsilon_n}\cup A_{\varepsilon_n}^c$, where 
\begin{equation*}
    A_{\varepsilon_n}=\left\{\theta\in\Theta: d(\theta,\theta^\star)\le \varepsilon_n /M^{1/2}_{\varepsilon_n}\right\},
\end{equation*}
which is equivalent to choosing $K=M_{\varepsilon_n}^{-1/2}$ for the set defined in \Cref{ass:concentration}, which is valid since, by \Cref{ass:lipz}, $M_{\varepsilon}>0$ for all $\varepsilon>0$. 
Use the fact that the Hellinger distance is bounded above by unity to obtain
\begin{IEEEeqnarray}{rl}
    \int_\Theta \helcurl{f_{\theta^{\star}}(\cdot\mid \y)}{f_{\theta}(\cdot\mid \y)}^2& \dt \pi_n^{(\tau)}\left(\theta \mid \y, \Ln\right)\nonumber\\
    &=  \int_{A_{\varepsilon_n}} \helcurl{f_{\theta^{\star}}(\cdot\mid \y)}{f_{\theta}(\cdot\mid \y)}^2 \dt \pi_n^{(\tau)}\left(\theta \mid \y, \Ln\right) \nonumber\\
    &\quad + \int_{A_{\varepsilon_n}^c} \helcurl{f_{\theta^{\star}}(\cdot\mid \y)}{f_{\theta}(\cdot\mid \y)}^2 \dt \pi_n^{(\tau)}\left(\theta \mid \y, \Ln\right) \nonumber\\
    &\le \int_{A_{\varepsilon_n}} \helcurl{f_{\theta^{\star}}(\cdot\mid \y)}{f_{\theta}(\cdot\mid \y)}^2 \dt \pi_n^{(\tau)}\left(\theta \mid \y, \Ln\right) \nonumber\\
    &\quad+ \int_{A_{\varepsilon_n}^c}\dt\pi_n^{(\tau)}(\theta\mid \y,\Ln)\label{eq:newhell}\\
    &\le \int_{A_{\varepsilon_n}} \helcurl{f_{\theta^{\star}}(\cdot\mid \y)}{f_{\theta}(\cdot\mid \y)}^2 \dt \pi_n^{(\tau)}\left(\theta \mid \y, \Ln\right) \nonumber\\
    &\quad + \exp(-Cn\tau\varepsilon_n^2/ M_{\varepsilon_n}).\nonumber
\end{IEEEeqnarray}
where the last line holds with probability at least $1-\exp(-Cn\tau\varepsilon_n^2/M_{\varepsilon_n})$ by \Cref{ass:concentration}. 

To control the first term above, we use Assumption \ref{ass:lipz} and in particular \Cref{eq:lipz}:
\begin{align}
\int_{A_{\varepsilon_n}} d_{\mathrm{H}}\{f_{\theta^{\star}}(\cdot\mid \y),&\,f_{\theta}(\cdot\mid \y)\}^2 \dt \pi_n^{(\tau)}\left(\theta \mid \y, \Ln\right) \nonumber \\
&= \int_{A_{\varepsilon_n}} \frac{1}{2} \int \left\{ f_{\theta^{\star}}(x \mid \y)^{1/2} - f_{\theta}(x \mid \y)^{1/2} \right\}^2 \dt \lambda(x) \dt \pi_n^{(\tau)}\left(\theta \mid \y, \Ln\right) \nonumber\\
&\le \int_{A_{\varepsilon_n}} M_{\varepsilon_n} d(\theta,\theta^{\star})^2\dt \pi_n^{(\tau)}\left(\theta \mid \y, \Ln\right) \nonumber.
\end{align}
By \Cref{ass:concentration}, for any $\theta\in A_{\varepsilon_n}$, we have that $d(\theta,\theta^\star)\le \varepsilon_n/M_\varepsilon^{1/2}$, and we can therefore replace the term in the integral with this bound, which yields
\begin{flalign}
   \int_{A_{\varepsilon_n}} M_{\varepsilon_n} d(\theta,\theta^{\star})^2\dt \pi_n^{(\tau)}\left(\theta \mid \y, \Ln\right) &\le \int_{A_{\varepsilon_n}} M_{\varepsilon_n}(\varepsilon_n/M_{\varepsilon_n}^{1/2})^2 \dt \pi_n^{(\tau)}\left(\theta \mid \y, \Ln\right) \nonumber\\
   &\le \varepsilon_n^2 \int_{\Theta}\dt \pi_n^{(\tau)}\left(\theta \mid \y, \Ln\right) \nonumber\\
   &\le \varepsilon_n^2.\label{eq:newhell2}
\end{flalign}
To show that the remaining term in \Cref{eq:triangle-ineq} is negligible, we again use Assumption~\ref{ass:lipz} and $\tvd{p}{q}^2\le 2\hel{p}{q}^2$ to obtain that 
\begin{flalign*}
\tvdcurl{f_{\theta^{\star}}(\cdot\mid \y)}{f_{\hat\theta_n}(\cdot\mid \y)}^2 & \le 2\helcurl{f_{\theta^{\star}}(\cdot\mid \y)}{f_{\hat\theta_n}(\cdot\mid \y)}^2\\
& = \int \left\{ f_{\theta^{\star}}(x\mid \y)^{1/2} - f_{\hat\theta_n}(x\mid \y)^{1/2} \right\}^2 \dt\lambda(x) \\
& \le d(\hat\theta_n,\theta^{\star})^2 M_{\nu_n} \\
& \le\nu_n^2
\end{flalign*}
with probability at least $1-\nu_n$. 
The last step comes from \Cref{ass:concentration}.
So, returning to \Cref{eq:triangle-ineq} and plugging in our bounds, we have, 
\begin{flalign*}
    \tvdcurl{f_{\hat\theta_n}(\cdot\mid \y)}{p_n^{(\tau)}(\cdot\mid\y,\Ln)} &\le \tvdcurl{f_{\theta^{\star}}(\cdot\mid \y)}{f_{\hat\theta_n}(\cdot\mid \y)} \nonumber \\
    &\quad + \tvdcurl{f_{\theta^{\star}}(\cdot\mid \y)}{p_n^{(\tau)}(\cdot\mid\y,\Ln)} \\
    &= \left[\tvdcurl{f_{\theta^{\star}}(\cdot\mid \y)}{f_{\hat\theta_n}(\cdot\mid \y)}^2 \right]^{1/2} \nonumber \\
    &\quad +\left[\tvdcurl{f_{\theta^{\star}}(\cdot\mid \y)}{p_n^{(\tau)}(\cdot\mid\y,\Ln)}^2\right]^{1/2} \\
    &\le \left[2\left\{\exp\left( -\frac{Cn\tau\varepsilon_n^2}{ M_\varepsilon} \right) + {\varepsilon_n^2}\right\}\right]^{1/2} + \left(2\nu_n^2\right)^{1/2} \\
    &\le \surd{2}\max\left[\left\{\exp\left( -\frac{Cn\tau\varepsilon_n^2}{ M_\varepsilon} \right)+ {\varepsilon_n^2}\right\}^{1/2}, \nu_n\right].
\end{flalign*}
Lastly, we simplify the first term in the maximum
\begin{equation*}
    \left\{{\varepsilon_n^2}+\exp\left(-\frac{Cn\tau\varepsilon_n^2}{M_{\varepsilon_n}}\right)\right\}^{1/2}\le \left({\varepsilon_n^2}\right)^{1/2}+\left\{\exp\left(-\frac{Cn\tau\varepsilon_n^2}{M_{\varepsilon_n}}\right)\right\}^{1/2}={\varepsilon_n}+\exp\left(-\frac{Cn\tau\varepsilon_n^2}{M_{\varepsilon_n}}\right), 
\end{equation*}
which, since $\surd{2}<2$, yields the stated result.
\end{proof}
\begin{proof}[of~\Cref{lemma:kl}]
Define the set 
\begin{equation}
    A_{\varepsilon_n} = \left\{\theta\in\Theta: \kldcurl{f_{\theta^\star}(\cdot\mid\y)}{f_{\theta}(\cdot\mid\y)}\le \varepsilon_n^2 \right\}.\label{eq:kl-set}
\end{equation}
Since the density $f_\theta(\cdot\mid\y)$ is non-negative, 
\begin{IEEEeqnarray}{rCl}
 p_n^{(\tau)}(\cdot\mid\y,\Ln)&= &\int_\Theta f_\theta (\cdot\mid\y)\dt\pi_n^{(\tau)}(\theta\mid\y,\Ln) \nonumber  \\
 & \ge  &\int_{A_{\varepsilon_n}} f_\theta(\cdot\mid\y) \dt\pi_n^{(\tau)}(\theta\mid\y,\Ln) \label{eq:inequality-Pintau-pAepsn} \\
 &= &\Pi_n^{(\tau)}(A_{\varepsilon_{n}}\mid\y,\Ln)
 p_{A_{\varepsilon_n}}^{(\tau)}(\cdot\mid\y,\Ln),  \nonumber
\end{IEEEeqnarray}
for $p_{A_{\varepsilon_n}}^{(\tau)}(\cdot\mid\y,\Ln)=\int_\Theta f_\theta(\cdot\mid\y)\dt\pi_{A_{\varepsilon_n}}^{(\tau)}(\theta\mid\y,\Ln)$, where 
\begin{equation*}
\Pi^{(\tau)}_n(A_{\varepsilon_n}\mid\y,\Ln)=\int_{A_{\varepsilon_n}}\dt\pi^{(\tau)}_n(\theta\mid\y,\Ln),
\end{equation*}
and, 
\begin{equation*}
\pi^{(\tau)}_{A_{\varepsilon_n}}(\theta\mid\y,\Ln)=\begin{cases}\frac{\pi^{(\tau)}_n(\theta\mid\y,\Ln)}{\Pi^{(\tau)}_n(A_{\varepsilon_{n}}\mid\y,\Ln)}&\text{ if }\theta\in A_{\varepsilon_{n}}\\0&\text{ otherwise}    
\end{cases}
\end{equation*}
denotes the restriction of the posterior to the set $A_{\varepsilon_n}$.
We then have
\begin{flalign}
    \kldcurl{f_{\theta^\star}(\cdot\mid\y)}{p_n^{(\tau)}(\cdot\mid\y,\Ln)}&= \Ecurl[x\sim f_{\theta^\star}(\cdot\mid\y)]{\log f_{\theta^\star}(x\mid\y) - \log p_n^{(\tau)}(x\mid\y,\Ln)}\nonumber \\
    &= \Ecurl[x\sim f_{\theta^\star}(\cdot\mid\y)]{\log f_{\theta^\star}(x\mid\y)} \nonumber \\
    &\quad - \Esqr[x\sim f_{\theta^\star}(\cdot\mid\y)]{\log\left\{ \int f_\theta(x\mid\y)\dt \pi_n^{(\tau)}(\theta\mid\y,\Ln)\right\}}\nonumber \\
    &\leq \Ecurl[x\sim f_{\theta^\star}(\cdot\mid\y)]{\log f_{\theta^\star}(x\mid\y)} \nonumber \\
    &\quad - \Esqr[x\sim f_{\theta^\star}(\cdot\mid\y)]{\log\left\{ p_{A_{\varepsilon_n}}^{(\tau)}(x\mid\y,\Ln)\right\}}\nonumber \\
    &\quad - \Esqr[x\sim f_{\theta^\star}(\cdot\mid\y)]{\log\left\{ \Pi^{(\tau)}_n(A_{\varepsilon_n}\mid\y,\Ln)\right\}}\nonumber \\
    &= \kldcurl{f_{\theta^\star}(\cdot\mid\y)}{p_{A_{\varepsilon_n}}^{(\tau)}(\cdot\mid\y,\Ln)}\nonumber \\
    &\quad -\log \Pi^{(\tau)}_n(A_{\varepsilon_n}\mid\y,\Ln)\nonumber\\
    &\le \int_{A_{\varepsilon}}\kldcurl{f_{\theta^\star}(\cdot\mid \y)}{f_\theta(\cdot\mid\y)}\dt\pi_{A_{\varepsilon_n}}^{(\tau)}(\theta\mid\y,\Ln) \nonumber \\
    &\quad -\log \Pi^{(\tau)}_n(A_{\varepsilon_n}\mid\y,\Ln)\label{eq:kld}
\end{flalign}
where the first inequality follows from \Cref{eq:inequality-Pintau-pAepsn} and the final inequality follows from the convexity of $\kldcurl{f_{\theta^\star}(\cdot\mid\y)}{\cdot}$ in the second argument. 

On $A_{\varepsilon_n}$, $\kldcurl{f_{\theta^\star}(\cdot\mid\y)}{f_\theta(\cdot\mid\y)}$ is bounded above by $\varepsilon_n^2$ by construction \Cref{eq:kl-set}, so that the first term in \Cref{eq:kld} is bounded by $\varepsilon_n^2$. 
For the second term, consider the Taylor expansion
\begin{equation}
    \log (1 - x) = -x -\frac{1}{2}x^2 - o\left(\frac{1}{2}x^2\right).
\end{equation}
Then, by \Cref{ass:concentration}, with probability at least $1-\exp(-Cn\tau\varepsilon_n^2)$,
\begin{align*}
    \log \Pi^{(\tau)}_n(A_{\varepsilon_n}&\mid\y,\Ln)=\log \{1-\Pi^{(\tau)}_n(A_{\varepsilon_n}^c\mid\y,\Ln)\}\\
    &= -\Pi^{(\tau)}_n(A_{\varepsilon_n}^c\mid\y,\Ln)-\frac{1}{2}\Pi^{(\tau)}_n(A_{\varepsilon_n}^c\mid\y,\Ln)^2 + o\left\{\Pi^{(\tau)}_n(A_{\varepsilon_n}^c\mid\y,\Ln)^2\right\}\\
    &= -\exp(-Cn\tau\varepsilon_n^2)-\frac{1}{2}\exp(-2Cn\tau\varepsilon_n^2)+o\left\{\frac{1}{2}\exp(-2Cn\tau\varepsilon_n^2)\right\}.
\end{align*}
Placing both terms into \Cref{eq:kld} then yields 
\begin{equation*}
\kldcurl{f_{\theta^\star}(\cdot\mid\y)}{p_n^{(\tau)}(\cdot\mid\y,\Ln)}\le \varepsilon_n^2 + \exp(-Cn\tau\varepsilon_n^2) + o(1),
\end{equation*}
and we conclude.
\end{proof}
\subsection{Additional results}
\label{sec:additional-results}
\begin{lemma}\label{lemma:tvd-expectation}
Under \Cref{ass:lipz,ass:concentration}, 
\begin{equation*}
    \Esqr[\y\sim\mathbb{P}]{\tvdcurl{f_{\theta^\star}(\cdot\mid\y)}{p_n^{(\tau)}(\cdot\mid\y,\Ln)}} \le\varepsilon_n+o(1),
\end{equation*}
where $\dgp$ is the true data-generating measure.
\end{lemma}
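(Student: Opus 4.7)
The plan is to leverage the high-probability bound already implicit in the proof of \Cref{lemma:tvd} and to lift it to an expectation bound via a good-event/bad-event decomposition. Write $D_n = \tvdcurl{f_{\theta^\star}(\cdot\mid\y)}{p_n^{(\tau)}(\cdot\mid\y,\Ln)}$ for brevity. Combining the chain of estimates between \Cref{eq:newhell} and \Cref{eq:newhell2} with the relation $\tvd{p}{q}^{2}\le 2\hel{p}{q}^{2}$ and the sub-additivity inequality $(a+b)^{1/2}\le a^{1/2}+b^{1/2}$ shows that the proof of \Cref{lemma:tvd} already yields the intermediate bound
\begin{equation*}
    D_n \le \surd{2}\left[\varepsilon_n + \exp\left\{-Cn\tau\varepsilon_n^{2}/(2M_{\varepsilon_n})\right\}\right]
\end{equation*}
with $\mathbb{P}$-probability at least $1-\exp(-Cn\tau\varepsilon_n^{2}/M_{\varepsilon_n})$. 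Importantly, this bound does not require passing through $f_{\hat\theta_n}$, so the portion of \Cref{ass:concentration} involving $\nu_n$ is not needed here.

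Let $E_n$ denote the event on which the above display holds. The next step is to decompose
\begin{equation*}
    \Esqr[\y\sim\mathbb{P}]{D_n} = \Esqr[\y\sim\mathbb{P}]{D_n\,\mathds{1}_{E_n}} + \Esqr[\y\sim\mathbb{P}]{D_n\,\mathds{1}_{E_n^c}},
\end{equation*}
bounding the first summand via the display above and bounding the second summand using the trivial inequality $D_n\le 1$ together with $\mathbb{P}(E_n^c)\le\exp(-Cn\tau\varepsilon_n^{2}/M_{\varepsilon_n})$. These estimates together produce
\begin{equation*}
    \Esqr[\y\sim\mathbb{P}]{D_n} \le \surd{2}\,\varepsilon_n + \surd{2}\exp\left\{-\frac{Cn\tau\varepsilon_n^{2}}{2M_{\varepsilon_n}}\right\} + \exp\left\{-\frac{Cn\tau\varepsilon_n^{2}}{M_{\varepsilon_n}}\right\}.
\end{equation*}
The condition $n\varepsilon_n^{2}/M_{\varepsilon_n}\rightarrow\infty$ in \Cref{ass:concentration} then collapses both exponential terms into an $o(1)$ remainder, and absorbing the absolute constant $\surd{2}$ into the convergence rate recovers the stated bound.

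There is no substantive obstacle: the argument is essentially a mechanical translation of the in-probability statement from \Cref{lemma:tvd} into one in expectation, using the trivial bound $D_n\le 1$ on the exceptional event and the fact that this event's probability decays at the same exponential rate already controlling the good-event bound. The only minor bookkeeping point concerns the factor $\surd{2}$ that appears when converting between Hellinger and total variation distances; it is either absorbed into the convergence rate $\varepsilon_n$ or the statement is to be read up to an absolute constant.
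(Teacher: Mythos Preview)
Your argument is correct. The paper takes a slightly different route: rather than splitting $\mathbb{E}[D_n]$ into good and bad events at the total-variation level, it first applies Jensen's inequality to obtain $\mathbb{E}_{\y\sim\dgp}[d_{\mathrm{TV}}]\le 2\{\mathbb{E}_{\y\sim\dgp}[d_{\mathrm{H}}^2]\}^{1/2}$, then takes expectations directly of the Hellinger-squared decomposition (the displays leading to \Cref{eq:newhell} and \Cref{eq:newhell2}), handling the $A_{\varepsilon_n}^c$ contribution via dominated convergence using the trivial bound $\int_{A_{\varepsilon_n}^c}\dt\pi_n^{(\tau)}\le 1$. Your good-event/bad-event split with $D_n\le 1$ on the exceptional set is more elementary and sidesteps the dominated-convergence invocation; the paper's route through expected squared Hellinger would be marginally more convenient if one later wanted control of higher moments. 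Both arguments deliver $C\varepsilon_n+o(1)$ for an absolute constant (yours with $C=\surd 2$, the paper's with $C=2$), and both tacitly absorb this constant into the stated rate, so your caveat about the $\surd 2$ factor applies equally to the paper's own proof.
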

\begin{proof}
The proof follows very similarly to the first part of Lemma~\ref{lemma:tvd}. 
In particular, the result follows if we can bound $\mathbb{E}_{\y\sim\dgp}[d_{\mathrm{H}}\{{f_{\theta^{\star}}(\cdot\mid \y)},{p_n^{(\tau)}(\cdot\mid\y,\Ln)}\}^2]$. 
This is because, for any densities $q,p$,
\begin{flalign*}
\mathbb{E}_{\y\sim\dgp}\{d_{\mathrm{TV}}(p,q)\}&=\mathbb{E}_{\y\sim\dgp}\left[\left\{d_{\mathrm{TV}}(p,q)^2\right\}^{1/2}\right]\\&\le \mathbb{E}_{\y\sim\dgp}\left[\left\{2d_{\mathrm{H}}(p,q)^2\right\}^{1/2}\right]\\&\le 2\left[\mathbb{E}_{\y\sim\dgp}\{d_{\mathrm{H}}(p,q)^2\} \right]^{1/2}
\end{flalign*}
where the first inequality follows from $d_{\mathrm{TV}}(p,q)^2\le 2d_{\mathrm{H}}(p,q)^2$, and the second from Jensen's inequality. 
We then see that the result will follow if we can show that $\mathbb{E}[ d_{\mathrm{H}}\{{f_{\theta^{\star}}(\cdot\mid \y)},{p_n^{(\tau)}(\cdot\mid\y,\Ln)}\} ^2]\le \varepsilon_n^2+o(1)$.

To this end, recall that, from the proof of \Cref{lemma:tvd}, in particular the arguments used to obtain \Cref{eq:newhell}, for $A_{\varepsilon_n}=\{\theta\in\Theta: d(\theta,\theta^\star)\le \varepsilon_n /M^{1/2}_{\varepsilon_n}\}$,
\begin{IEEEeqnarray}{rl}
    d_{\mathrm{H}}\{{f_{\theta^{\star}}(\cdot\mid \y)},\,&{p_n^{(\tau)}(\cdot\mid\y,\Ln)}\}^2 \nonumber \\
    &\leq \int_\Theta d_{\mathrm{H}}\{{f_{\theta^{\star}}(\cdot\mid \y)},{f_\theta(\cdot\mid\y)}\}^2 \dt \pi_n^{(\tau)}\left(\theta \mid \y, \Ln\right) \nonumber \\
    &\leq \int_{A_{\varepsilon_n}}d_{\mathrm{H}}\{{f_{\theta^{\star}}(\cdot\mid \y)},{f_\theta(\cdot\mid\y)}\}^2\dt \pi_n^{(\tau)}(\theta\mid\y,\mathsf{L}_n) \nonumber\\
    &\quad+ \int_{A_{\varepsilon_n}^c} \dt \pi_n^{(\tau)}(\theta\mid\y,\mathsf{L}_n).\label{eq:expect_ver}
\end{IEEEeqnarray}
From Assumption~\ref{ass:concentration}, we have that
\begin{equation*}
    0\le \int_{A_{\varepsilon_n}^c} \dt \pi_n^{(\tau)}(\theta\mid\y,\Ln)\le \exp(-Cn\tau\varepsilon_n^2/ M_{\varepsilon_n})
\end{equation*}
with probability at least $1-\exp(-Cn\tau\varepsilon_n^2/M_{\varepsilon_n})$. 
Since the right-hand side of the above does not depend on $\y$, we can apply the dominated convergence theorem to obtain
\begin{equation}\label{eq:expecver1}
\mathbb{E}_{\y\sim\dgp}\left\{\int_{A_{\varepsilon_n}} \dt \pi_n^{(\tau)}(\theta\mid\y,\Ln)\right\}=o(1).
\end{equation}
Lastly, from the steps used to obtain \Cref{eq:newhell2} in the proof of \Cref{lemma:tvd}, we know that 
\begin{equation}\label{eq:expecver2}
    \int_{A_{\varepsilon_n}}d_{\mathrm{H}}\{{f_{\theta^{\star}}(\cdot\mid \y)},{f_\theta(\cdot\mid\y)}\}^2\dt \pi_n^{(\tau)}(\theta\mid\y,\Ln)\le \varepsilon_n^2.
\end{equation}
Using \Cref{eq:expecver1} and \Cref{eq:expecver2} and taking expectations of both sides of \Cref{eq:expect_ver} yields the stated result.
\end{proof}

As discussed in \Cref{sec:technical-results} of the main text, a version of \Cref{lemma:tvd} is maintained if we instead consider a learning rate $\tau_n>0$ and allow $\tau_n\rightarrow0$ as $n\rightarrow\infty$. 
In particular, the result we will derive next shows that if we take a learning rate converging to zero, then the posterior concentration rate is reduced in accordance with the rate at which $\tau_n$ converges to zero. 
To show this formally, write $\Ln(\theta)$ to denote an arbitrary loss, suppressing the explicit dependence on $\y$ for simplicity, and rewrite the (generalised) posterior as
\begin{flalign*}
    \pi^{(\tau)}_n(\theta\mid\y,\Ln)=\frac{\pi(\theta)\exp\{n\tau \Ln(\theta)\}}{\int_\Theta \pi(\theta)\exp\{n\tau \Ln(\theta)\}\dt\theta}&=\frac{\pi(\theta)\exp[n\tau \{\Ln(\theta)-\Ln(\theta^\star)\}]}{\int_\Theta \pi(\theta)\exp[n\tau \{\Ln(\theta)-\Ln(\theta^\star)\}]\dt\theta}\\&=\frac{\pi(\theta)\exp[n\tau \{\Ln(\theta)-\Ln(\theta^\star)\}]}{Z_n}.
\end{flalign*}
We prove that a variant of \Cref{ass:concentration} remains satisfied, under the following regularity conditions, if we take a learning sequence $\tau_n>0$ and allow $\tau_n\rightarrow0$. 
The assumptions and lemmata we posit to arrive at this result are broadly similar to those of \citet[Theorem 3]{syring_gibbs_2023}, \citet[Theorem 2]{martin_direct_2022} and \citet[Theorem 2]{shen_rates_2001}, where $n$ is replaced by $n\tau_n$.
\begin{assumption}\label{ass:concentration_alt1}
    There exists a positive sequence $s_n\rightarrow0$ and constant $c_1>0$ such that, for $ns_n^2\tau_n\rightarrow\infty$, 
    \begin{equation*}
        \dgp\left[\sup_{\theta:d(\theta,\theta^\star)\ge s_n}n\tau_n\left\{\Ln(\theta)-\Ln(\theta^\star)\right\}>-c_1ns_n^2\tau_n\right]=o(1).
    \end{equation*}
\end{assumption}
Define $K_n(\theta,\theta^\prime) = \Ln(\theta) - \Ln(\theta^\prime)$ and $K(\theta,\theta')=\lim_n\Ecurl[\y\sim\dgp]{K_n(\theta,\theta')}$. 
Likewise, define $V(\theta,\theta')=\lim_n\text{var}_{\y\sim\,\mathbb{P}}\{n^{1/2}K_n(\theta,\theta') \}$. 
\begin{assumption}\label{ass:concentration_alt2}
    For some $t_n\rightarrow0$ such that $nt_n\tau_n\rightarrow\infty$, define the set 
    \begin{equation}\label{eq:gn_def}
    G_n =\{\theta\in\Theta: \max\{K(\theta^\star,\theta), V(\theta^\star,\theta)\}\le t_n\}.	
    \end{equation}
    Then 
    \begin{equation} \label{eq:Gn_bound}
        \int_{G_n} \dt\pi(\theta) \gtrsim e^{-2n \tau_nt_n}.
    \end{equation}
\end{assumption}
The following lemma bounds $Z_n$ using \Cref{ass:concentration_alt2} and is similar to Lemma 1 of \citet{syring_gibbs_2023}.
We present this result for completeness. 
\begin{lemma}\label{lem:extra1}
    Under Assumption \ref{ass:concentration_alt2}, if $n t_n\tau_n\rightarrow\infty$, then, with $\dgp$-probability converging to $1$, 
    \begin{equation*}
        \mathbb{P}\left\{Z_n\le  \Pi(G_n)e^{-2 nt_n\tau_n}\right\} \le 2\left(n \tau_nt_n\right)^{-1} .
    \end{equation*}
\end{lemma}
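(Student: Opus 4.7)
My plan is to follow the standard Ghosal--van der Vaart style lower bound for the normalising constant $Z_n$, restricted to the ``KL-type neighbourhood'' $G_n$ from \Cref{eq:gn_def}, and then turn the problem into a Chebyshev bound on a single average loss. First, since the integrand in $Z_n=\int\pi(\theta)\exp[n\tau_n\{\Ln(\theta)-\Ln(\theta^\star)\}]\dt\theta$ is non-negative, I would restrict the integral to $G_n$ and factor out the prior mass, writing
\begin{equation*}
    Z_n \ge \Pi(G_n)\int_{G_n}\exp[-n\tau_n K_n(\theta^\star,\theta)]\dt\tilde\pi(\theta),
\end{equation*}
where $\tilde\pi$ denotes the prior renormalised on $G_n$. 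Because the exponential is convex, Jensen's inequality gives the lower bound
\begin{equation*}
    Z_n \ge \Pi(G_n)\exp(-n\tau_n W_n), \quad W_n := \int_{G_n} K_n(\theta^\star,\theta)\dt\tilde\pi(\theta).
\end{equation*}
Consequently, the event $\{Z_n\le \Pi(G_n)e^{-2n\tau_n t_n}\}$ is contained in $\{W_n\ge 2t_n\}$, and it suffices to bound $\dgp$-probability of this event.

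The second step is to control the mean and variance of $W_n$ using the conditions built into $G_n$. By definition of $G_n$ and the limits $K$ and $V$, we have $\mathbb{E}_{\dgp}[K_n(\theta^\star,\theta)]=K(\theta^\star,\theta)+o(1)\le t_n+o(1)$ and $\mathrm{var}_{\dgp}[K_n(\theta^\star,\theta)]=V(\theta^\star,\theta)/n+o(1/n)\le t_n/n+o(1/n)$ uniformly in $\theta\in G_n$. By Fubini, $\mathbb{E}_{\dgp}[W_n]\le t_n+o(1)$. For the variance, I would apply Fubini again and use Cauchy--Schwarz on the covariance kernel,
\begin{equation*}
    \mathrm{var}_{\dgp}(W_n)=\iint \mathrm{cov}\{K_n(\theta^\star,\theta),K_n(\theta^\star,\theta')\}\dt\tilde\pi(\theta)\dt\tilde\pi(\theta')\le \int \mathrm{var}_{\dgp}\{K_n(\theta^\star,\theta)\}\dt\tilde\pi(\theta)\le t_n/n+o(1/n),
\end{equation*}
where the first inequality is Jensen applied to $\sqrt{\mathrm{var}(\cdot)}$.

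The final step is Chebyshev: since $n t_n\tau_n\to\infty$ forces $\tau_n$ bounded (enough to absorb the ``$2$''),
\begin{equation*}
    \dgp(W_n\ge 2t_n) \le \dgp(|W_n-\mathbb{E}_{\dgp}[W_n]|\ge t_n -o(1)) \le \frac{t_n/n+o(1/n)}{t_n^2(1+o(1))}\le \frac{2}{n\tau_n t_n},
\end{equation*}
eventually in $n$, which yields the claimed bound and explains the qualifier ``with $\dgp$-probability converging to $1$'' as absorbing the $o(1)$ corrections in mean and variance.

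The main obstacle, and the only non-bookkeeping step, is the variance control for $W_n$: one must handle a random integral over a prior-averaged family and push the supremum-type bound on $V(\theta^\star,\theta)$ through using Fubini together with a Cauchy--Schwarz dominance of the off-diagonal covariance. Once that is in place, the rest of the proof is a clean Jensen--Chebyshev composition, mirroring Lemma~1 of \citet{syring_gibbs_2023}.
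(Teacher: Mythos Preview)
Your approach is correct and takes a genuinely different route from the paper. Where you apply Jensen to pull the exponential outside the prior integral and then Chebyshev once to the averaged quantity $W_n=\int_{G_n}K_n(\theta^\star,\theta)\dt\tilde\pi(\theta)$, the paper instead works pointwise in $\theta$: it introduces the studentised deviation $Q_n(\theta^\star,\theta)=\{K_n(\theta^\star,\theta)-K(\theta^\star,\theta)\}/V(\theta^\star,\theta)^{1/2}$ and the bad set $\mathcal{Q}_n(\y)=\{\theta:|Q_n(\theta^\star,\theta)|\ge t_n^{1/2}\}$, lower-bounds $Z_n$ by restricting the integral to $G_n\cap\mathcal{Q}_n(\y)^c$ (where the integrand is at least $e^{-2n\tau_n t_n}$ deterministically), and then controls the random prior mass $\Pi\{G_n\cap\mathcal{Q}_n(\y)\}$ via Fubini and two nested applications of Markov's inequality. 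Your Jensen-then-Chebyshev argument is the classical Ghosal--Ghosh--van der Vaart device and is more economical; the Cauchy--Schwarz step on the covariance kernel is exactly how one pushes the per-$\theta$ variance bound through the prior average. The paper's route follows \citet[Lemma~1]{syring_gibbs_2023} more literally and sidesteps the variance-of-$W_n$ computation at the cost of the extra set machinery. Both deliver the same $O\{(nt_n)^{-1}\}$ rate.

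One small correction: the claim that ``$nt_n\tau_n\to\infty$ forces $\tau_n$ bounded'' is false (take $\tau_n=n$, $t_n=n^{-1/2}$). The inequality $(nt_n)^{-1}(1+o(1))\le 2(n\tau_n t_n)^{-1}$ you need follows simply from $\tau_n\le 2$ eventually, which holds in the intended regime $\tau_n\to 0$ of this section.
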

\begin{proof}
    Define
    \begin{equation}\label{eq:Qn_def}
        Q_n(\theta^\star,\theta) = \frac{\{\Ln(\theta^\star) - \Ln(\theta)\} -   K(\theta^\star,\theta)}{ V(\theta^\star,\theta)^{1/2}}=\frac{ K_n(\theta^\star,\theta)-K(\theta^\star,\theta)}{ V(\theta^\star,\theta)^{1/2}},
    \end{equation}
Let
\begin{equation}\label{eq:mathcalQn_def}
        \mathcal{Q}_n = \{(\theta, \y): |Q_n(\theta^\star,\theta)| \geq t_n^{1/2} \}
    \end{equation}
    and define the sets 
    \begin{align}
        \mathcal{Q}_n(\theta) &= \{\y \in \Y: (\theta, \y) \in \mathcal{Q}_n\},\,\text{and} \nonumber\\\quad\mathcal{Q}_n(\y) &= \{\theta \in \Theta: (\theta, \y) \in \mathcal{Q}_n\}. \label{eq:Qny_def}
    \end{align}
    Write $Z_n$ as
    \begin{flalign*}
        Z_n = &\int_\Theta \exp\{n\tau_n K_n(\theta,\theta^\star)\}\dt\pi(\theta) \\ 
        =&\int_\Theta \exp\left[\frac{-n\tau_nV(\theta^\star,\theta)^{1/2}\{ K_n(\theta^\star,\theta)-K(\theta^\star,\theta)\}}{V(\theta^\star,\theta)^{1/2}}\right]\exp\{-n\tau_n K(\theta^\star,\theta)\}\dt\pi(\theta) \\
        =&\int_\Theta \exp\{- n\tau_nV(\theta^\star,\theta)^{1/2}Q_n(\theta^\star,\theta)\}\exp\{-n\tau_n K(\theta^\star,\theta)\}\dt\pi(\theta)
    \end{flalign*}
    By \Cref{eq:mathcalQn_def}, on the set $\{\theta\in\Theta:G_n \cap \mathcal{Q}_n(\y)^c\}, -\vert Q_n(\theta^\star, \theta)\vert\ge -t^{1/2}_n$, and since $\vert Q_n(\theta^\star, \theta) \vert \geq Q_n(\theta^\star, \theta)$, $\exp\{-Q_n(\theta^\star,\theta)\}\ge\exp\{-\vert Q_n(\theta^\star,\theta) \vert\}\ge\exp(-t_n^{1/2})$.  Similarly we can bound $V(\theta^\star,\theta) \leq t_n$ and $K(\theta^\star,\theta) \leq t_n$ by \Cref{eq:gn_def}.
    Hence,
    \begin{flalign*}
        Z_n &\geq \int_{G_n \cap \mathcal{Q}_n(\y)^c}\exp\{- \tau_nV(\theta^\star,\theta)^{1/2}Q_n(\theta^\star,\theta)\}\exp\{-n\tau_n K(\theta^\star,\theta)\}\dt\pi(\theta) \\
        &\geq e^{-2\tau_n n t_n} \int_{G_n \cap \mathcal{Q}_n(\y)^c}\dt\pi(\theta)\\
        &=e^{-2\tau_n n t_n}\left[\Pi(G_n)- \Pi\{{G_n \cap \mathcal{Q}_n(\y)}\}\right],
    \end{flalign*}
    where $\Pi(A) = \int_A \dt\pi(\theta)$ and $\pi(\theta)$ is the prior.
    So we now have that
    \begin{flalign*}
        \mathbb{P}\left\{Z_n \le \Pi(G_n) e^{-2n\tau_n t_n}\right\}	&\le \mathbb{P}\left(e^{-2\tau_n n t_n}\left[\Pi(G_n)- \Pi\{{G_n \cap \mathcal{Q}_n(\y)}\}\right]\le \Pi(G_n) e^{-2n\tau_n t_n}\right)\\
        &=\mathbb{P}\left[e^{-2\tau_n n t_n} \Pi\{{G_n \cap \mathcal{Q}_n(\y)}\}\ge \frac{1}{2}\Pi(G_n) e^{-2n\tau_n t_n}\right]\\
        &=\mathbb{P}\left[ \Pi\{{G_n \cap \mathcal{Q}_n(\y)}\}\ge \frac{1}{2}\Pi(G_n)\right].
    \end{flalign*}
    By Markov's inequality,
    \begin{equation}\label{eq:prob_set_markov}
        \mathbb{P}\left[ \Pi\{{G_n \cap \mathcal{Q}_n(\y)}\}\ge \frac{1}{2}\Pi(G_n)\right] \le \frac{2\Esqr[\y\sim\dgp] {\Pi\{{G_n \cap \mathcal{Q}_n(\y)}\}}}{\Pi(G_n)}
    \end{equation}
    and we must therefore control $\Esqr[\y\sim\dgp]{\Pi\{{G_n \cap \mathcal{Q}_n(\y)}\}}$.
   By Fubini's theorem, 
\begin{flalign*}
	\Esqr[\y\sim\dgp]{\Pi\{{G_n \cap \mathcal{Q}_n(\y)}\}}&=\int_{\Y} \int_\Theta \mathds{1}_{\{\theta\in G_n\cap\mathcal{Q}_n(\y)\}}\dt\pi(\theta)\dt\mathbb{P}(\y)\nonumber\\&=\int_{\Y} \int_\Theta \mathds{1}_{\{\theta\in G_n\}} \mathds{1}_{\left\{\theta\in\mathcal{Q}_n(\y)\right\}}\dt\pi(\theta)\dt\mathbb{P}(\y)\nonumber\\ &=\int_{\Y} \int_{G_n}  \mathds{1}_{\left\{\theta\in\mathcal{Q}_n(\y)\right\}}\dt\pi(\theta)\dt\mathbb{P}(\y)\\ &=\int_{G_n}\int_{\Y}   \mathds{1}_{\left\{\theta\in\mathcal{Q}_n(\y)\right\}}\dt\mathbb{P}(\y)\dt\pi(\theta)
\end{flalign*}
If $\theta,\y\notin \mathcal{Q}_n$, then $\mathds{1}_{\left\{\theta\in\mathcal{Q}_n(\y)\right\}}=0$. 
As such, for $\y\in\mathcal{Y}^n$, the integrand only takes non-zero values on the joint event $(\theta,\y)\in\mathcal{Q}_n$.
Therefore, over $(\theta,\y)\in G_n\times \mathcal{Y}^n$, the integrand is non-zero only on the event $(\theta,\y)\in \mathcal{Q}_n$, which yields
\begin{flalign*}
	\Esqr[\y\sim\dgp]{\Pi\{{G_n \cap \mathcal{Q}_n(\y)}\}}&=\int_{G_n}\int_{\Y} \mathds{1}_{\left\{\theta\in\mathcal{Q}_n(\y)\right\}}\dt\mathbb{P}(\y)\dt\pi(\theta)\\&=	\int_{G_n}\mathbb{P}\left\{\y\in\Y:\y\in \mathcal{Q}_n(\theta)\right\} \dt\pi(\theta)
\end{flalign*}
Now, by Markov's inequality, 
\begin{flalign*}
\mathbb{P}\left\{\y\in\Y:\y\in \mathcal{Q}_n(\theta)\right\}&=\mathbb{P}\left\{\y\in\Y: \vert Q_n(\theta^\star,\theta)\vert \ge t_n^{1/2}\right\}\\
&=\mathbb{P}\left\{\y\in\Y: \frac{ \vert K_n(\theta^\star,\theta)-K(\theta^\star,\theta) \vert}{ V(\theta^\star,\theta)^{1/2}}\ge t_n^{1/2}\right\}\\
&=\mathbb{P}\left\{\y\in\Y: \frac{ n^{1/2}\vert K_n(\theta^\star,\theta)-K(\theta^\star,\theta)\vert}{ V(\theta^\star,\theta)^{1/2}}\ge (nt_n)^{1/2}\right\}\\
&=\mathbb{P}\left\{\y\in\Y: \frac{ [n^{1/2}\{K_n(\theta^\star,\theta)-K(\theta^\star,\theta)\}]^2}{ V(\theta^\star,\theta)}\ge nt_n\right\}\\
&\le \frac{1}{nt_n} \frac{1}{V(\theta^\star,\theta)}\E[\y\sim\dgp]{[n^{1/2}\{K_n(\theta^\star,\theta)-K(\theta^\star,\theta)\}]^2}\\
&=1/(nt_n)
\end{flalign*}
where the expectation in the inequality comes from the definitions of $K(\theta^\star,\theta)$ and $V(\theta^\star,\theta)$.
Hence, we can bound
\begin{flalign}
	\Esqr[\y\sim\dgp]{\Pi\{{G_n \cap \mathcal{Q}_n(\y)}\}}&=\int_{G_n}\mathbb{P}\left\{\y:\y\in \mathcal{Q}_n(\theta)\right\} \dt\pi(\theta)\nonumber\\&\le \frac{1}{n t_n} \int_{G_n}\pi(\theta)\dt\theta\nonumber\\&=\frac{\Pi(G_n)}{n t_n}.\label{eq:markov2}
\end{flalign}
Applying \Cref{eq:markov2} to \Cref{eq:prob_set_markov},
\begin{flalign*}
	\mathbb{P}\left\{Z_n \le \Pi(G_n) e^{-2n\tau_n\ t_n}\right\}	&\le \mathbb{P}\left[ \Pi\{{G_n \cap \mathcal{Q}_n(\y)}\}\ge \frac{1}{2}\Pi(G_n)\right]\\
    & \le \frac{2\Esqr[\y\sim\dgp] {\Pi\{{G_n \cap \mathcal{Q}_n(\y)}\}}}{\Pi(G_n)}\\
    & \le \frac{2}{\Pi(G_n)} \frac{\Pi(G_n)}{n t_n}\\&=\frac{2}{n t_n}	
\end{flalign*}	
as stated.
\end{proof}
Define the set $A_\varepsilon=\{\theta\in\Theta:d(\theta,\theta^{\star})\le K\varepsilon\}$ for $K$ arbitrarily large. 
Using \Cref{lem:extra1,ass:concentration_alt1} we achieve a version of \Cref{ass:concentration} in the main text where $\tau_n$ is allowed to shrink to zero. 
This result is similar to Theorem 2 in \citet{shen_rates_2001} and Theorem 3 in \citet{syring_gibbs_2023}.
\begin{lemma}\label{lem:extra_conv}
    Under \Cref{ass:concentration_alt1,ass:concentration_alt2}, for $\varepsilon_n= \max (s_n, t_n^{1/2})$, if $n\tau_n\varepsilon_n^2\rightarrow\infty$, for $K>0$ and sufficiently large, $c>0$, and $n$ sufficiently large, 
    \begin{equation*}
        \int_{A_{\varepsilon_n}^c} \dt\pi^{(\tau_n)}_n(\theta\mid\y,\Ln)\lesssim \exp(-nK^2 c \tau_n\varepsilon_n^2/2)
    \end{equation*}
    with $\dgp$-probability converging to one.
\end{lemma}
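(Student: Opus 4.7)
The plan is to follow the classical Ghosal--Ghosh--van der Vaart style decomposition, adapted to the learning rate $\tau_n$. I would write
\begin{equation*}
\int_{A_{\varepsilon_n}^c}\dt\pi_n^{(\tau_n)}(\theta\mid\y,\Ln) \;=\; \frac{N_n}{Z_n},
\end{equation*}
where
\begin{equation*}
N_n = \int_{A_{\varepsilon_n}^c} \exp\{n\tau_n K_n(\theta,\theta^\star)\}\dt\pi(\theta), \qquad Z_n = \int_\Theta \exp\{n\tau_n K_n(\theta,\theta^\star)\}\dt\pi(\theta),
\end{equation*}
using $K_n(\theta,\theta^\star)=\Ln(\theta)-\Ln(\theta^\star)$ from the setup preceding \Cref{ass:concentration_alt1}. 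I would then separately upper bound $N_n$ and lower bound $Z_n$, each on an event of $\dgp$-probability tending to one, and combine them.

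For the numerator, I would invoke \Cref{ass:concentration_alt1} at scale $K\varepsilon_n$: since $K\varepsilon_n \ge s_n$ and $n\tau_n(K\varepsilon_n)^2\to\infty$, the rate condition is preserved, and the hypothesis delivers, with probability $1-o(1)$,
\begin{equation*}
\sup_{\theta\in A_{\varepsilon_n}^c} n\tau_n\{\Ln(\theta)-\Ln(\theta^\star)\} \le -c_1 K^2 n\tau_n\varepsilon_n^2,
\end{equation*}
so that $N_n \le \exp(-c_1 K^2 n\tau_n\varepsilon_n^2)\,\pi(A_{\varepsilon_n}^c) \le \exp(-c_1 K^2 n\tau_n\varepsilon_n^2)$. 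For the denominator, I would invoke \Cref{lem:extra1} to get $Z_n \ge \pi(G_n)\exp(-2n\tau_n t_n)$ with probability at least $1-2/(n\tau_n t_n)=1-o(1)$, and then use the prior-mass condition \Cref{ass:concentration_alt2} to conclude $Z_n \gtrsim \exp(-4n\tau_n t_n)$.

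Combining both bounds on the intersection of the two good events (still probability $1-o(1)$) yields
\begin{equation*}
\frac{N_n}{Z_n} \;\lesssim\; \exp\bigl[-n\tau_n(c_1 K^2\varepsilon_n^2 - 4 t_n)\bigr].
\end{equation*}
Since $\varepsilon_n^2 \ge t_n$ by the definition $\varepsilon_n=\max(s_n,t_n^{1/2})$, choosing $K$ sufficiently large so that $4t_n \le c_1 K^2\varepsilon_n^2/2$ collapses the exponent to $-c_1 K^2 n\tau_n\varepsilon_n^2/2$, and setting $c=c_1$ yields the stated bound.

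The main obstacle is that \Cref{ass:concentration_alt1} is literally phrased at a single rate $s_n$, while the proof requires uniform control of the risk on the complement of a ball of radius $K\varepsilon_n$ with a quadratic-in-$K$ exponent. The cleanest resolution is to read \Cref{ass:concentration_alt1} as scale-free (as done in \citet{syring_gibbs_2023}), applying it directly at any admissible scale. Alternatively, one can peel $A_{\varepsilon_n}^c$ into dyadic shells $\{2^j K\varepsilon_n \le d(\theta,\theta^\star) < 2^{j+1}K\varepsilon_n\}$, control each shell via the single-scale hypothesis, and sum the resulting geometric series under a union bound; this second route avoids strengthening the assumption but is more bookkeeping-heavy.
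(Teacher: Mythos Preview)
Your proposal is correct and follows essentially the same route as the paper: the same numerator/denominator decomposition, the same use of \Cref{lem:extra1} together with \Cref{ass:concentration_alt2} to lower bound $Z_n$ by $\gtrsim e^{-4n\tau_n t_n}$, the same application of \Cref{ass:concentration_alt1} at scale $K\varepsilon_n$ to bound the numerator, and the same choice of $K$ large enough to absorb the $4t_n$ term. In fact, you are more explicit than the paper about the scale issue in \Cref{ass:concentration_alt1}: the paper simply writes ``since $K$ is large and $\varepsilon_n\ge s_n$'' and applies the assumption at radius $K\varepsilon_n$ without further comment, whereas you flag that this requires reading the hypothesis as scale-free in the spirit of \citet{syring_gibbs_2023} (or alternatively peeling).
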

\begin{proof}
    For $\varepsilon_n$ as in the statement of the proof, 
    \begin{flalign} \label{eq:restricted_posterior_def}
        \int_{A_{\varepsilon_n}^c} \dt\pi^{(\tau_n)}_n(\theta\mid\y,\Ln) = & \frac{1}{Z_n}\int_{A_{\varepsilon_n}^c}\exp[n\tau_n\{\Ln(\theta)-\Ln(\theta^\star)\}]\dt\pi(\theta).
    \end{flalign}    
    By \Cref{lem:extra1},
    \begin{equation}\label{eq:bound_from_ls2}
        \mathbb{P}\left\{Z_n \le \Pi(G_n) e^{-2n\tau_n t_n}\right\}\le \frac{2}{nt_n}.
    \end{equation}
    Now, by \Cref{ass:concentration_alt1}, with probability converging to one,
    \begin{flalign}\label{eq:bound_from_as1}
        \int_{A_{\varepsilon_n}^c}\exp[n\tau_n\{\Ln(\theta)-\Ln(\theta^\star)\}]\dt\pi(\theta) \le \exp\{-cn\tau_n (K \varepsilon_n)^2\},
    \end{flalign}
    since $K$ is large and $\varepsilon_n\ge s_n$.
    Hence, applying \Cref{eq:bound_from_ls2} and \Cref{eq:bound_from_as1} to \Cref{eq:restricted_posterior_def}, with $\dgp$-probability tending to one,
    \begin{equation*}
        \int_{A_{\varepsilon_n}^c} \dt\pi^{(\tau_n)}_n(\theta\mid\y,\Ln)\le \frac{e^{-cn\tau_nK^2\varepsilon_n^2}}{\Pi(G_n) e^{-2n\tau_n t_n}}.
    \end{equation*}
    Finally, by \Cref{ass:concentration_alt2}, $\Pi(G_n) \gtrsim \exp(-2n\tau_n t_n)$, so that $1 / \Pi(G_n) \lesssim 1 / \exp(-2n\tau_n t_n)$, and thus
    \begin{equation*}
        \int_{A_{\varepsilon_n}^c} \dt\pi^{(\tau_n)}_n(\theta\mid\y,\Ln) \le  \frac{e^{-cn\tau_nK^2\varepsilon_n^2}}{ e^{-4n\tau_n t_n}}.
    \end{equation*}
    For $K>0$ large enough so that $t_n\le \varepsilon_n^2\le cK^2\varepsilon_n^2/8$, we have that
    \begin{align*}
        \int_{A_{\varepsilon_n}^c} \dt \pi^{(\tau_n)}_n(\theta\mid\y,\Ln) &\le \frac{e^{-cn\tau_nK^2\varepsilon_n^2}}{ e^{-4n\tau_n t_n}} \\
        &\le \frac{e^{-cn\tau_nK^2\varepsilon_n^2}}{ e^{-4n\tau_n \varepsilon_n^2}} \\
        &\le \frac{e^{-cn\tau_nK^2\varepsilon_n^2}}{ e^{-4n\tau_n cK^2\varepsilon_n^2/8}} \\
        &= e^{-cn\tau_nK^2\varepsilon_n^2/2}
    \end{align*}
    with $\dgp$-probability tending to one. 
\end{proof}
\Cref{lem:extra1} shows that if $\tau_n\rightarrow0$ as $n\rightarrow\infty$, then the posterior concentration rate is determined by the condition $n\tau_n\varepsilon_n^2\rightarrow\infty$.
The rate of posterior concentration can then be expressed as $\varepsilon_{\tau,n}=(n\tau_n)^{-1/2}\log(n)$. 
Hence, if we take $\tau_n=n^{-1/2}$, the rate of posterior concentration, as determined by $\varepsilon_{\tau,n}$, cannot exceed $n^{-1/4}$. 
This illustrates that choosing a learning rate sequence  $\tau_n\rightarrow0$ essentially reduces your sample size from $n$ to $n\tau_n$, and reduces the rate of posterior concentration accordingly. 

A consequence of \Cref{lem:extra1} is that the results on the accuracy of the posterior predictive given in \Cref{lemma:tvd,lemma:kl} in the main text remain valid when we replace $\varepsilon_n$ in those results with $\varepsilon_{\tau,n}$. 
Hence, even if we allow $\tau_n\rightarrow0$, a version of the stated results in the main text will remain valid so long as $\tau_n$ does not go to zero faster than $\log(n)/n$.
\section{Experiment details and repetition with cross-validation}\label{sec:experiment-repetitions}
In the following section, we drop the dependence on an arbitrary loss function $\Ln$ and treat only the power posterior case as in the main text.
\subsection{Normal location example}\label{sec:normal-location-supp}
In order to demonstrate the flatness proven above and referred to in \Cref{sec:normal-location-example} of the main text, we empirically show the distance of the posterior predictive distribution, $p_n^{(\tau)}(\cdot\mid\y)$, to the true predictive, $\preddens_n(\cdot\mid\y)$.
We do this for a range of $\tau$, two different prior distributions, and two distances: the total variation distance, and the Kullback-Leibler divergence.

We simulate data from a $\normal(\theta^{\star} =0,{\sigma^\star}^2 = 1)$ distribution and consider the likelihood $\normal(\y ;\, \theta,\sigma = 1)$, so that the model is well-specified.
We define a weakly-informative prior $\pi(\theta) = \normal(\theta ;\, 0,\sigma_0^2)$. 
Letting $\sigma_0\to\infty$ results in a flat prior over the parameter space.
In this case, the posterior predictive density is $\normal(\cdot ;\, \mu_{n}, \sigma^2 + \sigma_{n}^2)$ where 
\begin{equation*}
    \sigma_{n}^2 = \frac{1}{n\tau / \sigma^2 + 1 / \sigma_0^2},\quad
    \mu_{n} = \sigma_{n}^2 \left( \frac{\mu_0}{\sigma_0^2} + \frac{n\tau\bar{y}_{1:n}}{\sigma^2} \right) .
\end{equation*}
Here and throughout, $\bar{y}_{1:n}$ denotes the sample mean over $n$ observations. 
We simulate $1,000$ data replicates from the true distribution and compute the total variation distance by quadrature integration for the experiments in the main text.

Now for the Kullback-Leibler divergence, we have that
\begin{align*}
    \kldcurl{\preddens_n(\cdot\mid\y)}{p_n^{(\tau)}(\cdot \mid y_{1:n})} &= \frac{1}{2}\log (1+\sigma^2_{n}) + \frac{1 + \mu_{n}^2}{2(1+\sigma_{n}^2)}- \frac{1}{2}.
\end{align*}
We can analytically compute the expectation under $y_{1:n} \sim \dgp$ to achieve the so-called \textit{risk} as a function of $\tau$,  
\begin{align*}
    \riskBiomet(\tau)&= \Esqr[y_{1:n}\sim \dgp]{\kldcurl{\preddens_n(\cdot\mid\y)}{p_n^{(\tau)}(\cdot \mid y_{1:n})}} \\
    &=\frac{1}{2}\log (1+\sigma^2_{n}) + \frac{1 + n\tau\sigma_{n}^2 \Ecurl[y_{1:n}\sim \dgp]{(\bar{y}_{1:n})^2}}{2(1+\sigma_{n}^2)}- \frac{1}{2}\\
    & =\frac{1}{2}\log (1+\sigma^2_{n}) + \frac{1 + \tau\sigma_{n}^2 }{2(1+\sigma_{n}^2)}- \frac{1}{2}\cdot
\end{align*}
If we take $\sigma_0^2 \to \infty$ to simplify this (which is equivalent to the prior disappearing with $n$), we get
\begin{equation}\label{eq:normal-location-risk}
    \riskBiomet(\tau) =\frac{1}{2}\log \left( 1+\frac{1}{n\tau} \right) + \frac{1 + n^{-1} }{2\{1+(n\tau)^{-1}\}}- \frac{1}{2}.
\end{equation}
The derivative of which with respect to $\tau$ is
\begin{equation*}
    \frac{\partial}{\partial \tau}\riskBiomet(\tau) = \frac{1}{2n}\left\{\frac{\tau(1+n^{-1}) - (\tau+n^{-1})}{\tau(\tau+n^{-1})^2}\right\} = \frac{1}{2n}\frac{(\tau -1)n^{-1}}{\tau(\tau+n^{-1})^2},
\end{equation*}
which is negative for $\tau < 1$, 0 at $\tau = 1$ and increasing for $\tau > 1$. 
This means a theoretical optimum is attained at $\tau = 1$.

\begin{figure}[t!]
    \centering
    \includegraphics{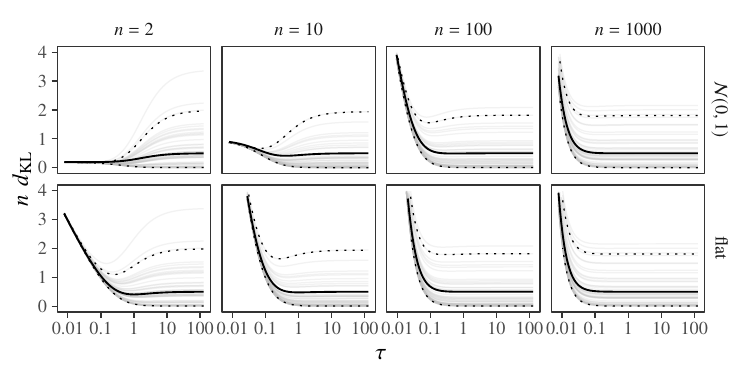}
    \caption{Normal location example. The grey curves correspond to individual dataset replicates,  dotted black lines to $5\%$ and $95\%$ quantiles, and solid black curves to expectation.}
    \label{fig:normal-location-analytic-risk}
\end{figure}
In Figure~\ref{fig:normal-location-analytic-risk} we plot the Kullback-Leibler divergence and the risk across two priors: a weakly-informative $\normal(0, \sigma_0^2 = 1)$ prior, and a flat prior (letting $\sigma_0^2\to\infty$).

As previously mentioned, in practice, we can estimate the Kullback-Leibler divergence with the expected negative log-likelihood, and approximate the inner expectation with cross-validation.
We call this approximation the leave-one-out cross-validation expected log-predictive density \citep{vehtari_practical_2017}, or just the cross-validation score in short.
In this case, the cross-validation score is
\begin{equation}
    \elpdBiomet(\tau) = \frac{1}{n}  \sum_{i=1}^n \log p_n^{(\tau)}(y_i \mid y_{-i}) =  \frac{1}{n} \sum_{i=1}^n \log \normal(y_{i};\, \mu_{-i}, \sigma^2 + \sigma_{-i}^2), \label{eq:normal-elpd}
\end{equation}
where now 
\begin{equation*}
    \sigma_{-i}^2 = \frac{1}{(n - 1)\tau / \sigma^2 + 1 / \sigma_0^2},\quad
    \mu_{-i} = \sigma_{-i}^2\left\{ \frac{\mu_0}{\sigma_0^2} + \frac{(n - 1)\tau\bar{y}_{-i}}{\sigma^2} \right\}.
\end{equation*}

It is worth emphasising that our results are meant only to explain the impact of the temperature parameter on predictive inferences when $n$ is large enough so that posterior concentration is in evidence, and they are not necessarily indicative of the behaviour of the posterior predictive in small samples. 
Indeed, as suggested by the experiments in \cite[Figure 3]{grunwald_inconsistency_2017}, the role of the temperature parameter can have meaningful impacts on predictive performance in small-to-moderate samples, but its impact abates as the sample size increases. 
It is relatively straightforward to illustrate this behaviour in our simple Gaussian example.

\begin{figure}
    \centering
    \includegraphics{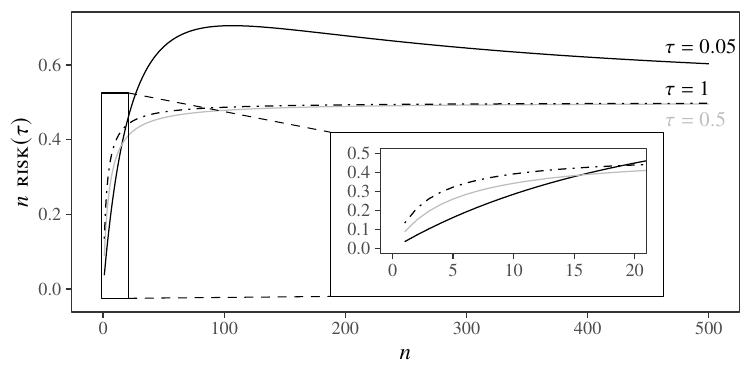}
    \caption{Normal location example. Lines correspond to the scaled risk of~\Cref{eq:normal-location-risk} across different values of $\tau$.}
    \label{fig:normal-location-pre-asymp}
\end{figure}

Consider a prior centred on the true parameter value $\pi(\theta) = \normal(\theta; \theta^\star, \surd 0.25^2)$.
In~\Cref{fig:normal-location-pre-asymp} we see how, once $n$ is large enough, the risk induced by different values of $\tau$ converge.
While $n$ is small, however, tempering can offer a significant predictive advantage.
In this case, the prior is very good and so assigning it more weight than the data results in better predictive distributions when the data are small.
This is seen in the figure inset, where small values of $\tau$ significantly out-perform the Bayes predictive ($\tau = 1$). 
What our theoretical results (e.g. \Cref{lemma:kl}) show is that, at a rate which depends on the posterior concentration, this predictive difference vanishes in $n$.
And eventually, all choices of $\tau$ result in indistinguishable predictive distributions, although for small $\tau$ this phenomenon only occurs at larger $n$.
As such, the predictive benefits of power posteriors are in finite-samples, and vanish with $n$.
Crucially, this behaviour is independent of whether or not the model is well-specified.
\subsection{Misspecified normal location example}\label{sec:misspecified-normal-location-supp}
\begin{figure}[t!]
    \centering
    \includegraphics{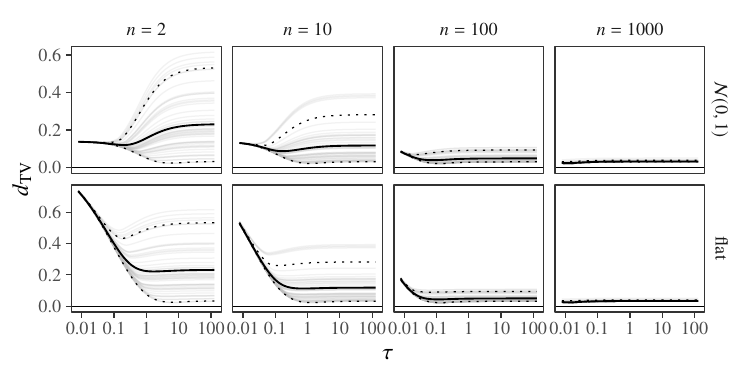}
    \caption{Misspecified normal location example. 
    The grey curves correspond to individual dataset replicates,  dotted black lines to $5\%$ and $95\%$ quantiles, and solid black curves to expectation.}
    \label{fig:normal-location-tvd-misspecified}
\end{figure}
In particular, suppose we instead sample $\y\sim\mathrm{Student-}t_{\nu}$ with $\nu = 10$ degrees of freedom, but keep the same likelihood and priors as before.
This way, the oracle model $f_{\theta^\star}$ is misspecified with tails much lighter than the true data-generating mechanism.
Repeating the experiment in total variation as before results in \Cref{fig:normal-location-tvd-misspecified}.
We find here that, while for small values of $n$ there may be a theoretically optimal value of $\tau$, as $n$ grows the effect of tempering vanishes.
As such, the predictive distribution induced by any particular value of 
$\tau$ quickly resembles $f_{\theta^\star}$. 
And since $\tvdcurl{\preddens_n}{f_{\theta^\star}}>0$, the total variance distance $\tvdcurl{p_n^{(\tau)}}{\preddens_n}$ never reaches zero.
\subsection{Coarsened posteriors}\label{sec:coarsened-posteriors}
\begin{figure}
    \centering
    \includegraphics{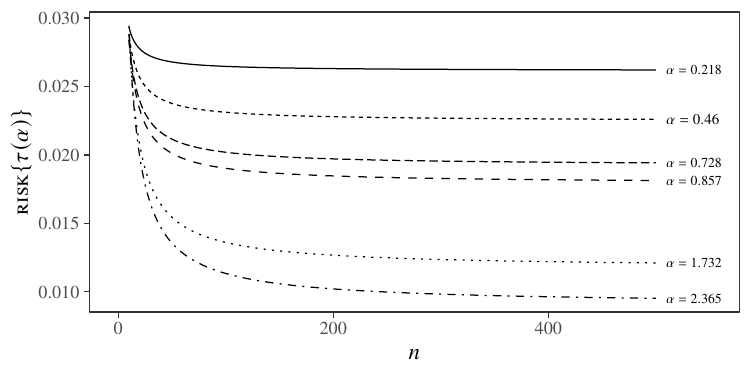}
    \caption{Normal location example. Lines represent the risk of~\Cref{eq:normal-location-risk} under different temperatures of the form $\tau(\alpha) = \alpha/(\alpha + n)$.}
    \label{fig:normal-location-tvd-coarsening}
\end{figure}
The coarsened posteriors of \citet{miller_robust_2019} employ a temperature that decreases to zero at rate $1/n$.
In particular, they take the temperature to be of the form $\tau(\alpha) = \alpha/(\alpha+n)$ where $\alpha>0$ is a hyper-parameter related to the radius of the coarsened set.
We presently recompute the risk in~\Cref{eq:normal-location-risk} for six different samples of $\alpha\sim\mathrm{Exp}(1)$ and $\tau(\alpha)$ as above, and plot the resulting $\riskBiomet\{\tau(\alpha)\}$ in~\Cref{fig:normal-location-tvd-coarsening}.
Since the posterior never concentrates, the limiting risk as $n\to\infty$ is a function of $\alpha$: 
the posterior predictive distributions indexed by different $\alpha$ remain distinct even asymptotically, and our theory does not apply to this case.
\subsection{Linear regression}
Let $\epsilon$ denote the `outlier' rate (which we set to $\epsilon = 0.5$) and $\delta$ the standard deviation of the outlier distribution ($\delta = \surd0.01$) in a linear regression experiment.
We then simulate the predictors by $X_i \sim\normal(0_p, I_p)$, and the target according to
\begin{equation*}
    y_i \sim (1 - \epsilon)\normal(X_i^\top \beta^\star, {\sigma^\star}^2)  + \epsilon \normal(0, \delta^2).
\end{equation*}
We suppress the dependence on the fixed and known constants $\epsilon, \delta$, and $\sigma^\star$ going forward (fixing $\sigma^\star = 1$).
The true parameters are $\beta^\star = (0.1, 0.1, 0.1, 0.1, 0) \in \mathbb{R}^p$, where $p = 5$ and only the first four are relevant.

Consider the regression coefficients $\beta\in\mathbb{R}^p$ with weakly-informative prior $\normal(0_p, \Sigma_0)$.
The likelihood is $f_\beta(y_i\mid X_i) = \normal(y_i;\,\beta^\top X_i, {\sigma^\star}^2)$, and is thus misspecified.
Now for data $\data = \{X, y\}$ with $y\in\mathbb{R}^n$ and $X\in\mathbb{R}^{n\times p}$,
\begin{align*}
    \log \pi_n^{(\tau)}(\beta\mid \data) &= \log \pi(\beta) + \tau \log f_\beta(y\mid X) + C \\
    &= -\frac{1}{2} \beta^\top \Sigma_0^{-1} \beta - \frac{\tau}{2{\sigma^\star}^2}\lVert X\beta - y \rVert^2 + C \\
    &= -\frac{1}{2} \beta^\top \Sigma_0^{-1} \beta - \frac{\tau}{2{\sigma^\star}^2} \left( \beta^\top X^\top X\beta - 2 \beta^\top X^\top y + y^\top y\right) + C \\
    &= -\frac{1}{2}\bigg\{ \beta^\top\underbrace{\left( \Sigma_0^{-1} + \tau{\sigma^\star}^{-2} X^\top X \right)}_{M} \beta - 2 \beta^\top \underbrace{\left( \tau{\sigma^\star}^{-2} X^\top y \right)}_{b} \bigg\} \\
    &= -\frac{1}{2}\left( \beta^\top M \beta - 2\beta^\top b \right) + C \\
    &= -\frac{1}{2}\left \{ (\beta - M^{-1}b)^\top M (\beta - M^{-1}b) \right \} + C \\
    &= -\frac{1}{2} (\beta - \beta_n)^\top \Sigma_n^{-1}(\beta - \beta_n) + C
\end{align*}
for some constant $C$ which does not depend on $\beta$, and where
\begin{align*}
    \beta_n = \tau{\sigma^\star}^{-2}\Sigma_n X^\top y,\quad \Sigma_n^{-1} = \tau{\sigma^\star}^{-2}X^\top X + \Sigma_0^{-1},
\end{align*}
so that the posterior is $\normal(\beta;\,\beta_n, \Sigma_n)$.
Considering the posterior predictive evaluated at new datum $(\tilde{X}, \tilde{y})$
\begin{align}
    p_n^{(\tau)}(\tilde{y}\mid\tilde{X}, \data) &= \int f_\beta(\tilde{y} \mid \tilde{X}) \pi^{(\tau)}_n(\beta\mid\data)\,\dt\beta \nonumber \\
    &= \normal(\tilde{y};\,\beta_n^\top\tilde{X}, \tilde{X}^\top\Sigma_n\tilde{X} + {\sigma^\star}^2). \label{eq:linreg-posterior-pred}
\end{align}
In the leave-one-out case, we have
\begin{equation*}
    \elpdBiomet(\tau) = \frac{1}{n} \sum_{i=1}^n \log\normal(y_i;\;\beta_{-i}^\top X_i, X_i^\top \Sigma_{-i} X_i + {\sigma^\star}^2),
\end{equation*}
where 
\begin{align*}
    \beta_{-i} = \tau{\sigma^\star}^{-2}\Sigma_{-i} X_{-i}^\top y_{-i},\quad \Sigma_{-i}^{-1} = \tau{\sigma^\star}^{-2}X_{-i}^\top X_{-i} + \Sigma_0^{-1}.
\end{align*}
\begin{figure}[t!]
    \centering
    \includegraphics{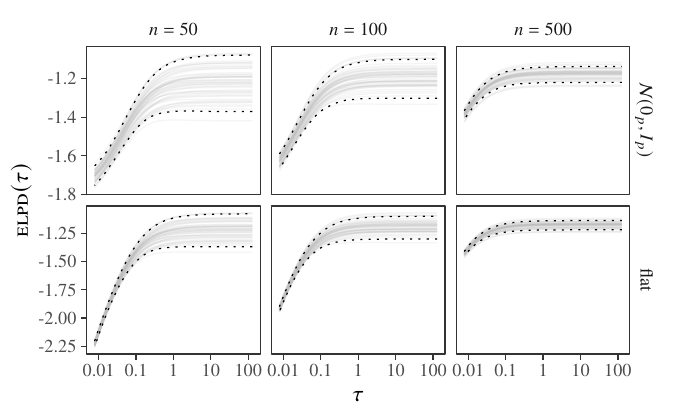}
    \caption{Misspecified linear regression example. 
    The grey curves correspond to individual dataset replicates and dotted black lines to $5\%$ and $95\%$ quantiles.}
    \label{fig:misspecified-linear-regression-elpd}
\end{figure}
This is shown in Figure~\ref{fig:misspecified-linear-regression-elpd} for varying $\tau, n$, and prior choice.

In the above, the $\elpdBiomet(\tau)$ is approximating 
\begin{equation*}
    \int \kldcurl{\preddens_n(\cdot\mid\tilde{X},\data)}{p_n^{(\tau)}(\cdot \mid \tilde{X}, \data)} \dt\dgp(\tilde{X}),
\end{equation*}
while in the total variation case we instead want to analyse
\begin{align*}
    \int \tvdcurl{\preddens_n(\cdot \mid \tilde{X},\data)}{p_n^{(\tau)}\left(\cdot \mid \tilde{X},\data\right)} \dt\dgp(\tilde{X}).
\end{align*}
As such, we can compute the expected total variation distance by using quadrature for the inner expectation (for total variation distance), and Monte Carlo integration for the outer expectation:
\begin{align*}
    \int \tvdcurl{\preddens_n(\cdot \mid \tilde{X},\data)}{p_n^{(\tau)}\left(\cdot \mid \tilde{X},\data\right)} &\dt\dgp(\tilde{X}) \nonumber\\
    &= \int \frac{1}{2}\int\left\vert\preddens_n(\tilde y \mid \tilde{X},\data) - p_n^{(\tau)}\left(\tilde y \mid \tilde{X},\data\right)\right\vert\dt\tilde y \dt\dgp(\tilde{X}) \\
    &\approx \frac{1}{\mathcal{S}} \sum_{s = 1}^\mathcal{S} \left\{\frac{1}{2}\int\left\vert\preddens_n(\tilde y \mid \tilde{X}^{(s)},\data) - p_n^{(\tau)}(\tilde y \mid \tilde{X}^{(s)},\data)\right\vert\dt\tilde y \right\},
\end{align*}
with $\{\tilde{X} ^{(s)}\}_{s = 1}^\mathcal{S}\sim\dgp$ simulated as previously described, and $p_n^{(\tau)}(\tilde y \mid \tilde{X}^{(s)},\data)$ as defined in \Cref{eq:linreg-posterior-pred}, and $\mathcal{S} = 10,000$.
\begin{figure}[t!]
    \centering
    \includegraphics{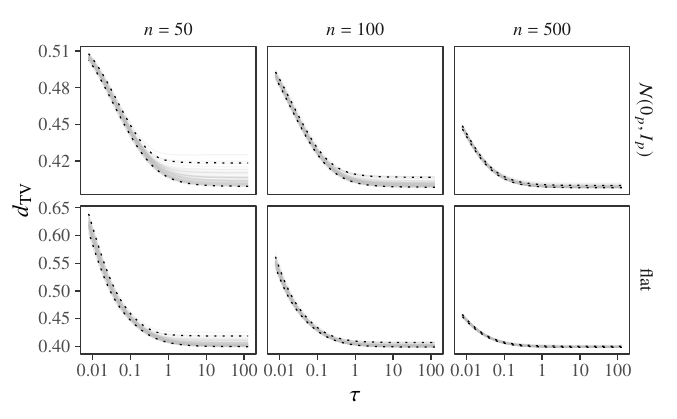}
    \vspace*{-1em}
    \caption{Misspecified linear regression example.
    The grey curves correspond to individual dataset replicates and dotted black lines to $5\%$ and $95\%$ quantiles.}
    \label{fig:misspecified-linear-regression-tvd}
\end{figure}
\subsection{Beta-binomial example}
Consider data sampled according to $y_i\sim \Bernoulli(\theta^\star)$.
Suppose of these $n$ observations we observe $x$ successes and $z = n - x$ failures, then we model the data as coming from a beta-binomial model with prior $\betadist(\alpha, \beta)$.\footnote{We use the shape-scale parameterisation of the beta distribution throughout.}
The posterior distribution under Bayesian inference with the likelihood scaled by $\tau$ following the $n$ observations is then $\pi_n^{(\tau)}(\theta \mid y_{1:n}) \propto \betadist(\theta ;\, \tau x + \alpha, \tau z + \beta)$.
In turn we have the posterior predictive
\begin{IEEEeqnarray*}{rl}
    p_n^{(\tau)}(\ypred \mid y_{1:n}) \,&= \int f_\theta(\ypred)\dt\pi_n^{(\tau)}(\theta \mid y_{1:n})\\
    &= \text{beta-binomial}(1, \tau x + \alpha, \tau z + \beta)
\end{IEEEeqnarray*}
evaluated on the hitherto unseen observation $\tilde{y}$. 
Considering the leave-one-out case, denoting $x_{-i}$ the number of successes with the $i$-th datum deleted, and likewise for $z_{-i}$ the number of failures.
Then the cross-validation score is
\begin{IEEEeqnarray*}{rl}
    \elpdBiomet(\tau) \,&= \frac{1}{n}\sum_{i=1}^n \log p_n^{(\tau)}(y_i \mid y_{-i}) \\
    &=  \frac{1}{n}\sum_{i=1}^n \log \{\text{beta-binomial}(y_i;\,1, \tau x_{-i} + \alpha, \tau z_{-i} + \beta)\}.
\end{IEEEeqnarray*}
\begin{figure}[t!]
    \centering
    \includegraphics{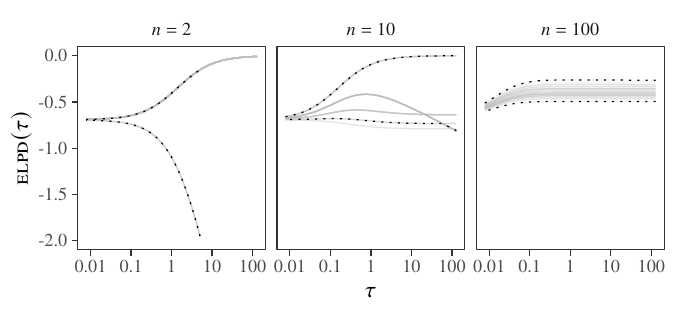}
    \caption{Beta-binomial example. 
    The grey curves correspond to individual dataset replicates and dotted black lines to $5\%$ and $95\%$ quantiles.}
    \label{fig:beta-binomial-elpd}
\end{figure}
In Figure~\ref{fig:beta-binomial-elpd} we show this cross-validation score as a function of $\tau$ across three data regimes and with a $\betadist(1, 1)$ prior.

\end{appendices}
\end{document}